\newtheorem{theorem}{Theorem} 
\newtheorem*{theorem*}{Theorem}
\newtheorem{lemma}[theorem]{Lemma}
\newtheorem{definition}[theorem]{Definition}
\newtheorem{proposition}[theorem]{Proposition}
\newtheorem{corollary}[theorem]{Corollary}
\theoremstyle{remark}
\newtheorem{remark}[theorem]{Remark}
\numberwithin{equation}{section} \numberwithin{theorem}{section}
\newcommand{\prech}{\prec_{\mathsf{h}}}
\newcommand{\precv}{\prec_{\mathsf{v}}}
\newcommand{\la}{\lambda}
\newcommand{\lab}{\boldsymbol\la}
\newcommand{\GT}{\mathbb{GT}^+}
\newcommand{\Z}{\mathbb{Z}}
\title[Stochastic six-vertex models and Hall-Littlewood positivity]
{Stochastic six-vertex models, Hall-Littlewood positivity and $t$-deformed Schensted insertions}
\author{Konstantin Matveev}
\address[Konstantin Matveev]{Department of Mathematics, Rutgers University, New Brunswick, NJ, USA. E-mail: kosmatveev@gmail.com}
\begin{document}

\begin{abstract} We prove a positivity theorem for a certain family of operators defined in terms of the stochastic six-vertex model. We explore connections of this result with other vertex models and $t$-deformed Schensted insertions. 
\end{abstract}

\maketitle

\section{Introduction}
\subsection{Positivity phenomena} The main result of this paper is  theorem \ref{HLpositivity}. It belongs to the following class of theorems. Suppose $P(h_{1}, h_{2}, \ldots, h_{m})$ is some polynomial with many positive and negative coefficients. Suppose each $h_i$ is itself a polynomial in $a_{1}, a_{2}, \ldots, a_{n}$. Then a priori there is no reason to expect that $P$ as a polynomial in $a_{1}, a_{2}, \ldots, a_{n}$ will have positive coefficients. If it happens to be the case, it might be an indication  that there is some structure underlying this phenomenon. Here is a relevant example. Denote by $\Lambda_{n}$ the commutative algebra of symmetric polynomials in $a_{1}, a_{2}, \ldots, a_{n}$ over $\mathbb{R}$. Define the $r$-th \emph{complete symmetric polynomial}
\begin{align}
h_{r}:= \sum_{1 \leq i_{1} \leq i_{2} \leq \cdots \leq i_{r} \leq n} a_{i_{1}}a_{i_{2}} \cdots a_{i_{r}} \in \Lambda_{n}, \qquad h_{0} = 1, \qquad h_{r} = 0 \quad \text{for } r<0.
\end{align}
\begin{proposition}
\label{BasicPositivity}
For a partition $\lambda = \{\lambda_{1} \geq \lambda_{2} \geq \cdots \geq \lambda_{\ell}\}$, $\lambda_{i} \in \mathbb{N}$, take polynomial $P$ to be $\det 
\left[ h_{\lambda_{i} - i + j} \right]_{i, j=1}^{\ell}$. Then $P$ as a polynomial in $a_{1}, a_{2}, \ldots, a_{n}$ has positive coefficients.
\end{proposition}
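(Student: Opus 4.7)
The plan is to recognize the determinant as the Jacobi--Trudi formula for the Schur polynomial $s_\lambda$, and then to exhibit $s_\lambda$ as a manifestly nonnegative sum of monomials indexed by semistandard Young tableaux of shape $\lambda$. Concretely, I would aim to prove
\[
\det\left[h_{\lambda_i - i + j}\right]_{i,j=1}^\ell = \sum_{T \in \mathrm{SSYT}(\lambda)} \prod_{(i,j) \in \lambda} a_{T(i,j)},
\]
where the right-hand side is a sum of monomials in the $a_k$'s with all coefficients equal to $1$, which immediately implies the claim.

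The proof I would use is the Lindstr\"om--Gessel--Viennot lemma. First, I would interpret $h_r$ combinatorially: if one considers lattice paths on $\mathbb{Z}^2$ moving by unit right steps and unit up steps, weighting each right step taken at height $k$ by $a_k$, then $h_r$ is exactly the total weight of paths that take $r$ right steps and $n-1$ up steps, since each such path records a weakly increasing sequence of indices $i_1 \leq \cdots \leq i_r$. Next, I would fix sources $A_i = (1-i,\,0)$ and sinks $B_j = (\lambda_j - j + 1,\, n-1)$, so that the weighted generating function for a single path from $A_i$ to $B_j$ equals $h_{\lambda_j - j + i}$. The determinant $\det[h_{\lambda_i - i + j}]$ (after reindexing) becomes the Lindstr\"om--Gessel--Viennot signed sum over $\ell$-tuples of paths from $\{A_i\}$ to $\{B_j\}$ with arbitrary matching permutation.

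Then I would invoke the standard sign-reversing involution that swaps tails at the topmost rightmost intersection of any two paths; this kills all contributions from intersecting path systems and changes the associated permutation, so only non-intersecting tuples survive, all with sign $+1$. A non-intersecting tuple must use the identity matching, and the condition that paths never meet translates exactly into the column-strict / row-weakly-increasing conditions of a semistandard Young tableau of shape $\lambda$, where the $k$-th entry in the $i$-th row of $T$ records the height at which the $i$-th path takes its $k$-th right step. Hence the determinant equals the generating function of $\mathrm{SSYT}(\lambda)$, proving positivity.

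The main technical point, and the only place requiring care, is arranging the sources and sinks so that the bijection between non-intersecting path systems and semistandard tableaux is clean, and checking that the sign-reversing involution is well-defined (i.e.\ the chosen crossing is unambiguously the ``first'' one and the swap is an involution preserving total weight). Once that bookkeeping is set up, the positivity follows with no further computation.
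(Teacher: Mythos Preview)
Your proposal is correct and follows essentially the same route as the paper: both identify $\det[h_{\lambda_i-i+j}]$ with the Schur polynomial $S_\lambda$ and then invoke the tableau expansion $S_\lambda=\sum_{\lab}a^{\lab}$ to conclude positivity. The only difference is that the paper simply cites the Jacobi--Trudi identity and the tableau formula (referring to \cite{F97}), whereas you supply the standard Lindstr\"om--Gessel--Viennot proof of that identity; this is more detail than the paper gives but not a different strategy.
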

In this case $P$ turns out to be the Schur polynomial $S_{\lambda}(a_{1}, a_{2}, \ldots, a_{n})$ due to the Jacobi-Trudi formula. It has representation as a summation over semistandard tableaux $\lab$: 
\begin{align}
\label{BasicTableauSum}
S_{\lambda} = \sum_{\lab \text{ of shape } \lambda} a^{\lab} = \sum_{\lab \text{ of shape } \lambda} \prod_{i=1}^{\infty} a_{i}^{\lab(i)},
\end{align}
where $\lab(i)$ is the number of entries in $\lab$ equal to $i$. See \cite{F97} for more details. So coefficients of $P$ in this case are positive integers. Theorem \ref{HLpositivity} is a generalization of proposition \ref{BasicPositivity}. It comes from the following "commutative diagram" of generalizations. 

\[\begin{tikzcd}
\text{Propostion \ref{BasicPositivity}} \qquad \arrow{r}{\text{$t$-deformation}} \arrow[swap]{d}{\text{plactic lift}} & \qquad \text{Propostion \ref{HLBasicPositivity}} \arrow{d}{\text{$t$-plactic lift}} \\
\text{Propostion \ref{PlacticPositivity}} \qquad \arrow{r}{\text{$t$-deformation}} & \qquad \text{Theorem \ref{HLpositivity}} 
\end{tikzcd}
\]
Below we explain all the constituent elements of this diagram in more detail. The genesis of the paper is the following:
\begin{enumerate}
\item
Realizing that well known propositions \ref{BasicPositivity}, \ref{HLBasicPositivity} and less well known proposition \ref{PlacticPositivity} are true and a guess that theorem \ref{HLpositivity} might be true as well. 

\smallskip

\item
Checking correctness of theorem \ref{HLpositivity} for many specific partitions $\lambda$ with the assistance of \emph{Wolfram Mathematica}.

\smallskip

\item
Using the angle of $t$-deformed plactic algebra actions to find the right generalization of the summation formula \eqref{BasicTableauSum} from which theorem \ref{HLpositivity} follows. 

\smallskip

\item
Realizing that extended vertex models provide the right framework for expressing and proving such generalization.  

\end{enumerate}

\subsection{Hall-Littlewood symmetric polynomials} 
\label{HL polynomials}
Hall-Littlewood symmetric polynomial  $P_{\lambda}$ is a $t$-deformation of the Schur polynomial $S_{\lambda}$. It becomes $S_{\lambda}$ for $t=0$. See \cite{Mac99} for more details and \cite{H59}, \cite{L61} for the origin of the concept. One possible definition of $P_{\lambda}(a_{1}, a_{2}, \ldots, a_{n})$ is the following. Denote by $m_{k}(\lambda)$ the number of parts $\lambda_{i}$, which are equal to $k$. Then 
\begin{align*}
P_{\lambda}:= \frac{1}{v_{\lambda}(t)} \sum_{w \in S_{n}} w\left(a_{1}^{\lambda_{1}}a_{2}^{\lambda_{2}} \cdots a_{n}^{\lambda_{n}} \prod_{1 \leq i < j \leq n}\frac{a_{i}-ta_{j}}{a_{i}-a_{j}} \right), \quad \text{where } v_{\lambda}(t) := \prod_{k \geq 0} \prod_{i=1}^{m_{k}(\lambda)} \frac{1-t^{i}}{1-t}.
\end{align*}
\begin{proposition}
\label{HLBasicPositivity}
For $0 \leq t < 1$ all the coefficients of $P_{\lambda}$ are non-negative. 
\end{proposition}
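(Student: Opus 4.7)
The plan is to derive the combinatorial (Gelfand--Tsetlin) formula
\[
P_{\lambda}(a_{1}, \ldots, a_{n}; t) \;=\; \sum_{T} \psi_{T}(t)\, a^{T},
\]
where $T$ ranges over semistandard tableaux of shape $\lambda$ with entries in $\{1, \ldots, n\}$, $a^{T} = \prod_{i} a_{i}^{T(i)}$, and each $\psi_{T}(t)$ is a polynomial in $t$ that is manifestly non-negative on $[0,1)$. Once this formula is in hand, proposition \ref{HLBasicPositivity} is immediate.

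The crux is the one-step branching rule
\[
P_{\lambda}(a_{1}, \ldots, a_{n}; t) \;=\; \sum_{\mu} \psi_{\lambda/\mu}(t)\, a_{n}^{|\lambda|-|\mu|}\, P_{\mu}(a_{1}, \ldots, a_{n-1}; t),
\]
where the sum runs over partitions $\mu \subset \lambda$ such that $\lambda/\mu$ is a horizontal strip, and $\psi_{\lambda/\mu}(t)$ is an explicit product of factors $1 - t^{k}$ with $k \geq 1$ (indexed by a set of columns determined by the strip). To prove this I would start from the antisymmetrization definition of $P_{\lambda}$, split the sum over $S_{n}$ as a sum over cosets $S_{n}/S_{n-1}$ by choosing which variable plays the role of $a_{n}$, and use the factor $\prod_{i<n}(a_{i} - t\, a_{n})/(a_{i} - a_{n})$ to extract the residue at $a_{n}$-contribution. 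After collecting powers of $a_{n}$ corresponding to $|\lambda|-|\mu|$, the remaining piece is $P_{\mu}$ on $(a_{1},\ldots,a_{n-1})$ weighted by the claimed $\psi_{\lambda/\mu}(t)$, with the horizontal-strip constraint appearing as the condition that no non-zero contribution arises otherwise.

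With the branching rule established, the full tableau formula follows by induction on $n$: the base case $n=1$ gives $P_{(m)}(a_{1};t) = a_{1}^{m}$ (and $0$ if $\ell(\lambda) > 1$), and iterating identifies the chain $\varnothing = \mu^{(0)} \subset \mu^{(1)} \subset \cdots \subset \mu^{(n)} = \lambda$ with a semistandard tableau $T$, setting $\psi_{T}(t) = \prod_{i=1}^{n} \psi_{\mu^{(i)}/\mu^{(i-1)}}(t)$. Since each factor $1 - t^{k}$ lies in $[0,1]$ for $t \in [0,1)$, every $\psi_{T}(t) \geq 0$, completing the proof.

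The main obstacle is the branching rule and, more specifically, extracting the explicit non-negative product form of $\psi_{\lambda/\mu}(t)$: the rational factor $\prod_{i<j}(a_{i}-t\,a_{j})/(a_{i}-a_{j})$ does not cancel in an obvious way, and isolating the horizontal-strip structure together with the correct $(1-t^{m_{k}(\mu)})$-factors requires careful bookkeeping of the symmetrization. An alternative route that bypasses this calculation is to derive branching from the Pieri rule $P_{\mu} \cdot g_{r} = \sum_{\lambda} \varphi_{\lambda/\mu}(t)\, P_{\lambda}$ applied iteratively, at the cost of needing Pieri as input.
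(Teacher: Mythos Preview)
Your proposal is correct and follows essentially the same route as the paper: the paper deduces the proposition directly from the tableau summation formula
\[
P_{\lambda}=\sum_{\lab\text{ of shape }\lambda}\psi_{\lab}(t)\,a^{\lab},
\]
noting that each weight $\psi_{\lab}(t)$ is a product of factors of the form $1-t^{k}$ (written there box-by-box via an exponent $e(s)$), hence non-negative for $0\le t<1$. The only difference is that the paper simply quotes this formula as known, whereas you outline how to derive it from the branching rule; the underlying argument is the same.
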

This result follows from the following summation formula
\begin{align}
\label{HLTableauSum}
P_{\lambda} = \sum_{\lab  \text{ of shape } \lambda} \psi_{\lab}(t) a^{\lab}
\end{align}
Here tableau weight $\psi_{\lab}(t)$ can be defined as the product 
$\displaystyle \prod_{s \in \text{ Boxes of } \lab}\left(1-e(s)\right)$, where for a box $s$ with entry $p$ in the $i$-th row and the $j$-th column of $\lab$ we define
\begin{multline*}
e(s):= \begin{cases} 
      0, & \text{if $j=1$ or the $(j-1)$-st column} \\ & \text{of $T$ also contains $p$}, \\
      t^{\lvert\{\text{entries $<p$ in the $(j-1)$-st column of $T$\}}\rvert-i+1}, & \text{otherwise.}
   \end{cases}
\end{multline*}
All product terms are clearly non-negative for $0 \leq t < 1$. Proposition \ref{BasicPositivity} follows from proposition \ref{HLBasicPositivity} by substituting $t=0$.
\begin{figure}[h]
\label{HLTableaux}
\includegraphics[width = 
0.4\textwidth]{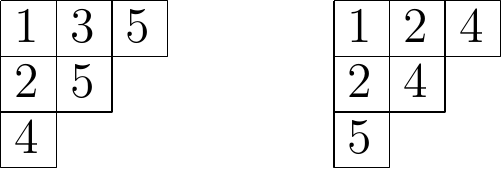}
\caption{$\lambda = (3, 2, 1)$, $n=5$. \emph{Left:} A tableau with weight $\psi(t) = \left( 1-t^{2}\right)^{2}$. \emph{Right:} A tableau with weight $\psi(t) = 1-t$.}
\end{figure}

Let $g_{k}=(1-t)P_{(k)}$. For $t=0$ polynomial $g_{k}$ becomes $h_k$. Then it is easy to show that $\Lambda_{n} = \mathbb{R}[g_{1}, g_{2}, \ldots, g_{n}]$ and factorization of the generating function for $g_{k}'s$
\begin{align}
\label{generatingfunction}
1+\sum_{k=1}^{\infty} g_{k}\alpha^{k} = \prod_{i=1}^{n} \frac{1-t\alpha a_{i}}{1 - \alpha a_{i}}
\end{align}

\subsection{Plactic algebra action}
\label{PlacticAlgebraActions}
\emph{Plactic monoid} $Pl_{n}$ of rank $n$ is a monoid generated by letters $\mathsf{1}, \mathsf{2}, \ldots, \mathsf{n}$ modulo the {\it Knuth relations}:
\begin{align*}
		xzy \equiv zxy \quad (x \leq y < z), \qquad
                     yxz \equiv yzx \quad (x < y \leq z).
\end{align*}
See \cite{Lot02} for more details. 

\begin{definition}
$Pl_{n}$ acts on the set of subsets of $\{1, 2, \ldots, n\}$ via the following formula.  For $S \subseteq \{1, 2, \ldots, n\}$ define
\begin{align}
\mathsf{i} \cdot S = 
\begin{cases}
S, \quad \text{if } i \in S, \\
S\cup\{i\}, \quad \text{if } i \notin S \text{ and } S\cap \{i+1, \ldots, n\} = \emptyset, \\
S\cup\{i\} - \left \{ \text{First element of $S\cap \{i+1, \ldots, n\}$}\right\}, \quad \text{if } i \notin S \text{ and } S\cap \{i+1, \ldots, n\} \neq \emptyset.
\end{cases}
\end{align}
Consider the plactic algebra $\mathbb{R}[a_{1}, a_{2}, \ldots, a_{n}][Pl_{n}]$ spanned by the monoid $Pl_{n}$. By linearly extending the previous action we get that $\mathbb{R}[a_{1}, a_{2}, \ldots, a_{n}][Pl_{n}]$ acts on the space of formal linear combinations of subsets of $\{1, 2, \ldots, n\}$.
\end{definition}
Consider the following commuting elements of $\mathbb{R}[a_{1}, a_{2}, \ldots, a_{n}][Pl_{n}]$.

\begin{multline}
H_{r}:= \sum_{1 \leq i_{1} \leq i_{2} \leq \cdots \leq i_{r} \leq n} (a_{i_{1}}a_{i_{2}} \cdots a_{i_{r}}) \ \mathsf{i_{1}} \cdot  \mathsf{i_{2}} \cdot  \cdots \cdot \mathsf{i_{r}} \in \mathbb{R}[a_{1}, a_{2}, \ldots, a_{n}][Pl_{n}], \\ H_{0} = Id, \qquad H_{r} = 0 \quad \text{for } r<0.
\end{multline}

\begin{proposition}
\label{PlacticPositivity}
For any partition $\lambda$ element  $\det 
\left[ H_{\lambda_{i} - i + j} \right]_{i, j=1}^{\ell} \in  \mathbb{R}[a_{1}, a_{2}, \ldots, a_{n}][Pl_{n}]$ sends any subset $S \subseteq \{1, 2, \ldots, n\}$ to a linear combination of subsets of $\{1, 2, \ldots, n\}$ with each coefficient being a polynomial in $a_{1}, a_{2}, \ldots, a_{n}$ with positive coefficients. 
\end{proposition}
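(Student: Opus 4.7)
The plan is to establish a plactic refinement of the Jacobi--Trudi identity, working in the algebra $\mathbb{R}[a_{1},\ldots,a_{n}][Pl_{n}]$: for any partition $\lambda$ of length $\ell\leq n$,
\begin{align*}
\det\bigl[H_{\lambda_{i}-i+j}\bigr]_{i,j=1}^{\ell} \;=\; \sum_{T}\, a^{T}\, \mathsf{w}(T),
\end{align*}
where $T$ ranges over semistandard Young tableaux of shape $\lambda$ with entries in $\{1,\ldots,n\}$, $a^{T}=\prod_{i}a_{i}^{m_{i}(T)}$ is the content monomial, and $\mathsf{w}(T)\in Pl_{n}$ is the plactic class of any fixed reading word of $T$. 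Given this identity, the proposition is immediate: applied to a subset $S$, each factor $\mathsf{w}(T)\cdot S$ is itself a single subset (each generator $\mathsf{i}$ sends a subset to a subset, and the action descends to $Pl_{n}$), while the coefficient $a^{T}$ is a monomial with coefficient $1$. Grouping by the resulting output subset yields a polynomial in the $a_{i}$'s with non-negative integer coefficients.

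To prove the identity, I would expand the determinant in the standard way,
\begin{align*}
\det\bigl[H_{\lambda_{i}-i+j}\bigr] \;=\; \sum_{\sigma \in S_{\ell}} \mathrm{sgn}(\sigma) \sum_{(w_{1},\ldots,w_{\ell})} a^{w_{1}}\cdots a^{w_{\ell}}\,\mathsf{w}_{1}\mathsf{w}_{2}\cdots\mathsf{w}_{\ell},
\end{align*}
where for each $\sigma$ the inner sum runs over tuples of weakly increasing words with $|w_{i}|=\lambda_{i}-i+\sigma(i)$ (terms with a negative length are zero). Following the Lindström--Gessel--Viennot strategy, I would then construct a sign-reversing, weight-preserving involution on the set of ``bad'' tuples---those with $\sigma\neq \mathrm{id}$, or with $\sigma=\mathrm{id}$ but failing column strictness---by locating the topmost pair of adjacent rows $(w_{i},w_{i+1})$ that exhibit a column violation, identifying the leftmost offending column $k$, and swapping the suffixes $w_{i,\geq k}$ and $w_{i+1,\geq k}$. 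The fixed points of this involution are exactly the SSYTs of shape $\lambda$, each occurring with $\sigma=\mathrm{id}$.

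The main obstacle is verifying that the suffix swap is compatible with the plactic relations, so that the Knuth class of the full concatenation $\mathsf{w}_{1}\mathsf{w}_{2}\cdots\mathsf{w}_{\ell}$ is preserved. This reduces to a careful local calculation at the swap location using the defining relations $xzy \equiv zxy$ ($x\leq y<z$) and $yxz \equiv yzx$ ($x<y\leq z$) of $Pl_{n}$, and is the substance of the Lascoux--Schützenberger plactic Jacobi--Trudi theorem. Once this compatibility is checked, the signed sum collapses to the tableau sum above, and the positivity statement follows.
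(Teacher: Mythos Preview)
Your reduction is the same as the paper's: both arguments rest on the plactic Jacobi--Trudi identity
\[
\det\bigl[H_{\lambda_i-i+j}\bigr]_{i,j=1}^{\ell}=S^{Pl}_{\lambda}=\sum_{T\in\mathbb{GT}^{(n)}(\lambda)} a^{T}\,\mathsf{w}(T),
\]
after which applying the action to $S$ and noting that each $\mathsf{w}(T)\cdot S$ is a single subset finishes the job. The paper phrases this last step via the first-column map $\mathcal{S}$, but it is the same observation you make.

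Where you diverge from the paper is in how the plactic Jacobi--Trudi identity itself is justified. The paper does \emph{not} construct an involution. It invokes Proposition~\ref{prop:comsub} (from \cite{Lot02}): the plactic Schur polynomials $S^{Pl}_{\mu}$ commute and multiply with Littlewood--Richardson structure constants, so the assignment $S^{Pl}_{\mu}\mapsto S_{\mu}$ is a ring isomorphism from the commutative subalgebra they span onto $\Lambda_n$. Pulling the classical identity $\det[h_{\lambda_i-i+j}]=S_{\lambda}$ back through this isomorphism gives the plactic version in one line.

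Your proposed route---an LGV-style sign-reversing involution via suffix swaps---has a genuine gap at exactly the point you flag as ``the main obstacle.'' The standard tail swap preserves the \emph{content} of the concatenated word, but it does \emph{not} in general preserve its Knuth class, and your write-up offers no mechanism for why it should. Already for $\lambda=(2,1)$ the naive swap you describe fails even to land in the correct index set: from the bad $\sigma=\mathrm{id}$ tuple $(w_1,w_2)=(\mathsf{12},\mathsf{1})$, swapping suffixes at the offending column $k=1$ yields row lengths $(1,2)$, which correspond to no permutation of $\{1,2\}$. With the concatenation order you wrote ($\mathsf{w}_1\mathsf{w}_2\cdots\mathsf{w}_\ell$), the term $\mathsf{12}\cdot\mathsf{1}=\mathsf{121}$ is Knuth-equivalent to the shape-$(2,1)$ word $\mathsf{211}$, whereas every $\sigma=(12)$ term with the same content, namely $\mathsf{112}$, lies in the shape-$(3)$ class; so no pairing within that content class can match plactic classes. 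The cancellation in $H_2H_1-H_3$ really happens because \emph{other} $\sigma=\mathrm{id}$ terms (not the bad ones) share the plactic class of the $\sigma=(12)$ terms. In short, a term-by-term Knuth-class-preserving involution of the kind you sketch does not exist for this expansion order, and the ``local calculation using the Knuth relations'' you allude to cannot succeed as stated. The standard proofs of the plactic Jacobi--Trudi identity go through the RSK/plactic-LR machinery, precisely as the paper does.
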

Note that $\mathsf{i} \cdot =\{1, 2, \ldots, n\} = \{1, 2, \ldots, n\}$ for any $1 \leq i \leq n$. Hence $\det 
\left[ H_{\lambda_{i} - i + j} \right]_{i, j=1}^{\ell} \cdot \{1, 2, \ldots, n\} = S_{\lambda}(a_{1}, a_{2}, \ldots, a_{n}) \cdot \{1, 2, \ldots, n\}$. So proposition \ref{BasicPositivity} is a special case of proposition \ref{PlacticPositivity} for $S =\{1, 2, \ldots, n\}$. The reason behind proposition \ref{PlacticPositivity} is combinatorics of the Schensted insertion algorithm. This connection is explained in more detail in section \ref{classicRSK}.

\subsection{Stochastic six-vertex model}
We will work with inhomogeneous transfer matrices of a stochastic six-vertex model. Let $V$ be a two-dimensional real vector space spanned by elements $\mathsf{1}$ and $\mathsf{2}$. Given two parameters $a$, $t$ one can define an operator $R = R(a, t): V^{\otimes 2} \to V^{\otimes 2}$ by 
\begin{multline}
R(\mathsf{2} \otimes \mathsf{2}) = \mathsf{2} \otimes \mathsf{2}, \qquad R(\mathsf{2} \otimes \mathsf{1}) = \frac{1-a}{1-ta} \cdot \mathsf{2} \otimes \mathsf{1} + \frac{(1-t)a}{1-ta} \cdot \mathsf{1} \otimes \mathsf{2}, \\
R(\mathsf{1} \otimes \mathsf{2}) = \frac{1-t}{1-ta} \cdot \mathsf{2} \otimes \mathsf{1}+ \frac{t(1-a)}{1-ta} \cdot \mathsf{1} \otimes \mathsf{2}, \qquad R(\mathsf{1} \otimes \mathsf{1}) = \mathsf{1} \otimes \mathsf{1}.
\end{multline}
For $0 \leq a \leq 1$ and $0 \leq t < 1$ the matrix of $R$ with respect to basis $\{\mathsf{2} \otimes \mathsf{2}, \mathsf{2} \otimes \mathsf{1}, \mathsf{1} \otimes \mathsf{2}, \mathsf{1} \otimes \mathsf{1}\}$ of $V^{\otimes 2}$ is stochastic. This $R$-matrix gives rise to a stochastic six-vertex model with weights as depicted on Fig. \ref{Stochastic6V}.

\begin{figure}[h]
\includegraphics[width = 
0.9\textwidth]{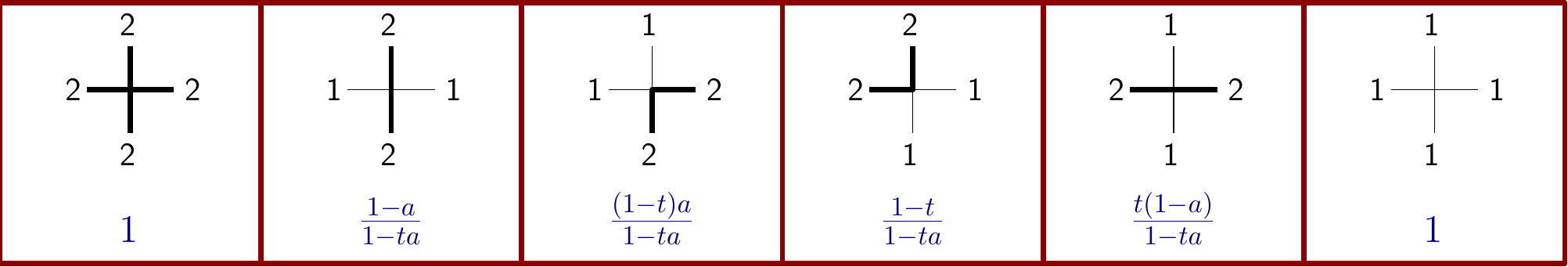}
\caption{Stochastic six-vertex model with weights coming from $R(a, t)$. Fat lines correspond to $\mathsf{2}$'s, normal lines correspond to $\mathsf{1}$'s.}
\label{Stochastic6V}
\end{figure}
For parameters $\alpha, t, a_{1}, a_{2}, \ldots, a_{n}$ we use $R$ to define a transfer operator of a six-vertex model 
\begin{align*}
T(\alpha) = T(\alpha, t; a_{1}, a_{2}, \ldots, a_{n}): V^{\otimes n} \to V^{\otimes n}
\end{align*} as specified on Fig. \ref{TransferMatrix}. Note that there is fixed input $\mathsf{1}$ on the left, while boundary condition on the right is free (it can be either $\mathsf{1}$ or $\mathsf{2}$). 
\begin{figure}[h]
\includegraphics[width = 
0.9\textwidth]{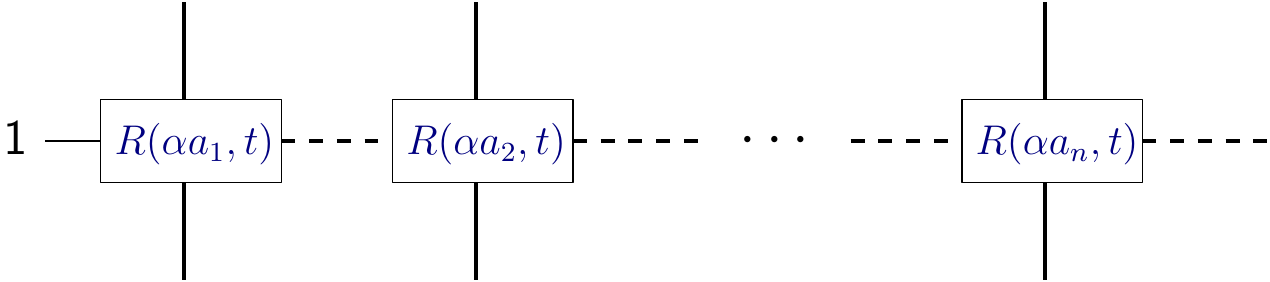}
\caption{$T(\alpha)$ is a transfer operator for the inhomogeneous six-vertex model defined via $R$. Boundary condition is fixed to be $\mathsf{1}$ on the left and is free on the right.} 
\label{TransferMatrix}
\end{figure}
For $0 \leq \alpha a_{1}, \alpha a_{2}, \ldots, \alpha a_{n} \leq 1$ and $0 \leq t < 1$ the matrix of $T(\alpha)$ with respect to basis $\{\mathsf{1}, \mathsf{2}\}^{\otimes n}$ of $V^{\otimes n}$ is stochastic. $V^{\otimes n} = \bigoplus_{k=0}^{n} V_{n, k}$, where each subspace $V_{n, k}$ is defined as the span of vectors $e_{1} \otimes e_{2} \otimes \cdots \otimes e_{n}$ with exactly $k$ $\mathsf{1}$'s and $n-k$ $\mathsf{2}$'s among the $e_{i}$'s. Then clearly $T(\alpha)(V_{n, k}) \subset V_{n, k} \oplus V_{n, k+1}$ for $0 \leq k < n$. Operators $T(\alpha)$ and $T(\beta)$ commute via a standard argument of pulling an extra vertex through due to the Yang-Baxter equation for $R$ specified on  Fig. \ref{YangBaxter}.
\begin{figure}[h]
\includegraphics[width = 
0.9\textwidth]{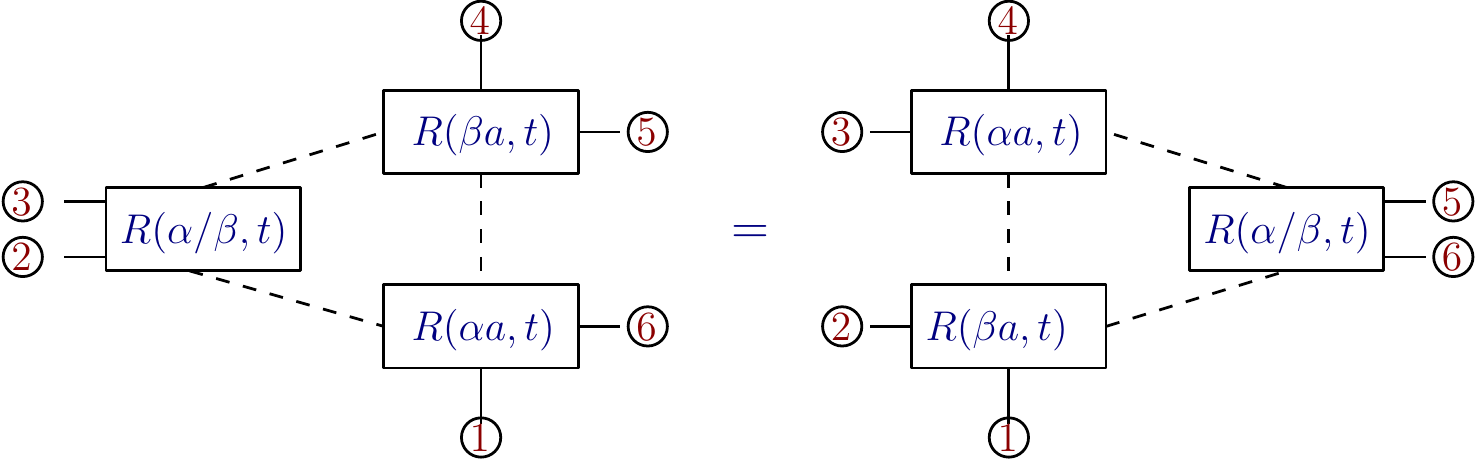}
\caption{Yang-Baxter equation for matrix $R$.}
\label{YangBaxter}
\end{figure}
Let 
\begin{align}
\label{definitionmod}
\widetilde{T}(\alpha) := \left( \prod_{i=1}^{n} \frac{1-t\alpha a_{i}}{1- \alpha a_{i}} \right) T(\alpha).
\end{align}
Then $\widetilde{T}(\alpha)$ is the transfer operator for the inhomogeneous six-vertex model with weights specified on Fig. \ref{Stochastic6Vmod}.
\begin{figure}[h]
\includegraphics[width = 
0.9\textwidth]{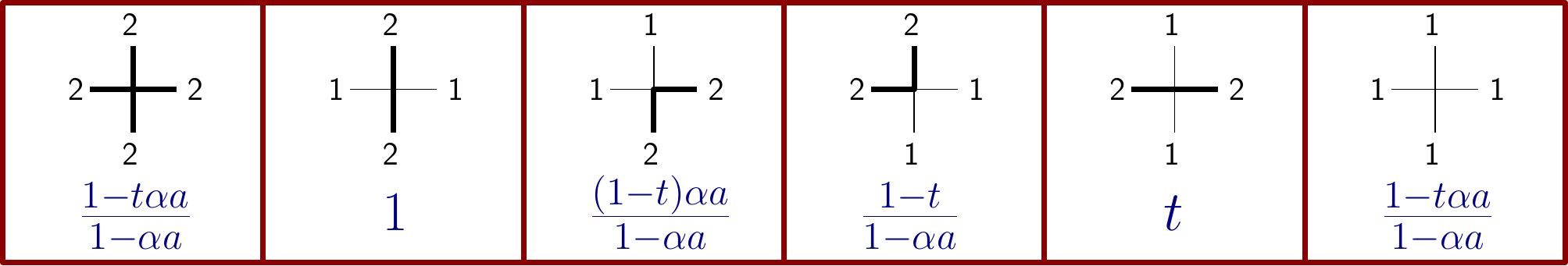}
\caption{Six-vertex model weights for $\widetilde{T}(\alpha)$.}
\label{Stochastic6Vmod}
\end{figure}
Consider the power expansion $\widetilde{T}(\alpha) = Id + \sum_{k=1}^{\infty} T_{k} \alpha^{k}$.  Operators $\widetilde{T}(\alpha)$ and $\widetilde{T}(\beta)$ commute with each other, hence operators $T_k$ and $T_{\ell}$ commute for all positive integers $k, \ell$. 
\begin{definition} Define representation $\Theta: \Lambda_{n} \to End(V^{\otimes n})$ by  $\Theta(g_{k}) = T_{k}$ for $1 \leq k \leq n$. We will later check that this equality also holds for $k >n$.  
\end{definition}
\subsection{Statement of the main result}
Our main result is the following
\begin{theorem}
\label{HLpositivity}
For $0 \leq t < 1$ and any partition $\lambda$ all matrix elements of $\Theta(P_{\lambda})$ with respect to basis $\{\mathsf{1}, \mathsf{2}\}^{\otimes n}$ are polynomials in $a_{1}, a_{2}, \ldots, a_{n}$ with non-negative coefficients.  
\end{theorem}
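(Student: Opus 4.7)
My plan is to upgrade the tableau summation formula \eqref{HLTableauSum} from an identity of symmetric polynomials to an operator identity, realizing $\Theta(P_\lambda)$ as the partition function of an \emph{extended} vertex model whose weights are manifestly non-negative for $0 \le t < 1$.

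First I would use the generating-function identity \eqref{generatingfunction} together with the definition \eqref{definitionmod} to identify $\widetilde T(\alpha) = \sum_{k \ge 0} \Theta(g_k)\,\alpha^k$, so that each $\Theta(g_k)$ is already a partition function for the six-vertex model whose weights (Fig.\ \ref{Stochastic6Vmod}) are non-negative polynomials in $a_i$ and $t$. By the Yang-Baxter relation the operators $\widetilde T(\alpha_j)$ commute, so stacking $\ell = \ell(\lambda)$ copies yields
\[
\widetilde T(\alpha_1)\cdots \widetilde T(\alpha_\ell) = \sum_{\mu_1,\ldots,\mu_\ell \ge 0} \Theta(g_{\mu_1}\cdots g_{\mu_\ell})\,\alpha_1^{\mu_1}\cdots \alpha_\ell^{\mu_\ell},
\]
a symmetric generating series whose coefficients are matrix elements of $\Theta$-images of products of single-row generators, each already a sum of non-negative terms.

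Next I would extract $\Theta(P_\lambda)$ from this generating series. Since $P_\lambda$ expands in the $g_k$'s with both signs, the naive extraction produces a signed sum over six-vertex configurations on the stack. The crucial step is to realize this signed sum as the partition function of an \emph{extended} vertex model obtained by adjoining to the stack auxiliary boundary columns that encode the shifted parts $\lambda_i - i + j$, and to cancel the negative configurations by a sign-reversing involution of Lindstr\"om--Gessel--Viennot type (or its vertex-model analog). The surviving configurations should be in bijection with pairs consisting of a semistandard tableau $\lab$ of shape $\lambda$ together with a six-vertex configuration compatible with $\lab$, whose total weight factors as $\psi_\lab(t)$ times a monomial in the $a_i$'s times a product of non-negative six-vertex transition weights. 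This exhibits every matrix element $\langle \mu|\Theta(P_\lambda)|\nu\rangle$ as a sum of non-negative quantities.

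The main obstacle is the construction of this extended model and the matching involution. Individual $\Theta(g_k)$'s have non-negative entries, but the alternating combinations used to form $P_\lambda$ could a priori introduce genuine cancellations in the answer, so positivity must arise from the combinatorial structure of $t$-deformed Schensted insertion rather than from the ambient stochastic weights alone. I expect the required bijection to be a $t$-plactic lift of the insertion proof underlying Propositions \ref{BasicPositivity} and \ref{PlacticPositivity}, with the stacking of transfer matrices playing the role of iterated row insertion and the tableau factors $(1 - t^k)$ of \eqref{HLTableauSum} emerging naturally from the column-wise vertex transitions.
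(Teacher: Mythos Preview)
Your proposal is an outline rather than a proof, and you are candid about this: the ``crucial step'' --- constructing the extended vertex model and the sign-reversing involution that eats the negatives coming from the Jacobi--Trudi--type expansion of $P_\lambda$ in the $g_k$'s --- is precisely what is missing. Until that construction is carried out, nothing prevents the signed sum from producing genuine cancellations; the six-vertex weights on the stack are non-negative, but the coefficients multiplying the configurations are not, and there is no LGV lemma available here because the paths in the stochastic six-vertex model are not non-intersecting lattice paths in the usual sense. So the gap is real: the involution and the bijection with pairs $(\lab,\text{configuration})$ are the entire content of the theorem, not a detail to be filled in.

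The paper's proof takes a route that is related in spirit but structurally quite different from what you sketch. Rather than stacking $\ell(\lambda)$ copies of $\widetilde T$ and cancelling signs, it enlarges the local space from $V=\langle \mathsf{1},\mathsf{2}\rangle$ to $W=\langle \mathsf{0},\mathsf{1},\mathsf{2}\rangle$, introduces a new $R$-matrix $R_{\mathrm{ext}}(a,t)\colon W\otimes U_2\to W\otimes U_2$ (with $U_2$ an auxiliary infinite-dimensional space) whose weights are manifestly non-negative for $0\le t<1$, and proves the closed formula
\[
\Theta(P_\lambda)=\pi^{\otimes n}\circ H_{\lambda_2',\lambda_1'}\circ H_{\lambda_3',\lambda_2'}\circ\cdots\circ H_{0,\lambda_m'}\circ i^{\otimes n},
\]
a composition of $\lambda_1$ transfer operators indexed by the \emph{columns} of $\lambda$, not a sum over tableaux of shape $\lambda$. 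The identity is then established not by an involution but by verifying that the right-hand side satisfies the same horizontal Pieri rule \eqref{PieriFormulas} as $\Theta(P_\lambda)$; this in turn is proved via a Yang--Baxter--type equation linking $R_{\mathrm{ext}}$, a second three-color model $R_3$, and an auxiliary model $R_{\mathrm{aux}}$, yielding commutation relations (Lemma~\ref{KeyHrelationsLemma}) that push $\mathcal A,\mathcal B$ operators through the $H$'s. The tableau-sum picture you are aiming for, equation~\eqref{HLPlacticSum}, is discussed in the paper only as motivation and is never established directly; the actual mechanism is column-by-column rather than row-by-row, and algebraic (Pieri + Yang--Baxter) rather than bijective.
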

\begin{corollary}
For $a_{1}, a_{2}, \ldots, a_{n} > 0$ and any partition $\lambda$ with $\lambda'_{1} \leq n$ the matrix of $\frac{\Theta(P_{\lambda})}{P_{\lambda}(a_{1}, a_{2}, \ldots, a_{n})}$ with respect to basis $\{\mathsf{1}, \mathsf{2}\}^{\otimes n}$ is stochastic.
\end{corollary}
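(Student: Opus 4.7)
The plan is to separate the stochasticity claim into (a) non-negativity of all entries and (b) the column-sum-equals-one condition, and dispose of each separately. Entrywise non-negativity of $\Theta(P_\lambda)/P_\lambda(a_1,\ldots,a_n)$ is immediate from Theorem \ref{HLpositivity}: the numerator has non-negative entries in this regime (entries are polynomials in the $a_i$'s with non-negative coefficients, evaluated at positive $a_i$'s), and the denominator is strictly positive because the tableau sum \eqref{HLTableauSum} has at least one term with $\psi_{\lab}(t) > 0$ for $0 \leq t < 1$, and at least one semistandard filling of shape $\lambda$ with entries in $\{1,\ldots,n\}$ exists precisely under the hypothesis $\lambda'_1 \leq n$.

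For (b), introduce the linear functional $\phi \colon V^{\otimes n} \to \mathbb{R}$ defined on the basis $\{\mathsf{1},\mathsf{2}\}^{\otimes n}$ by $\phi(v) = 1$ for every basis vector $v$. A non-negative square matrix $M$ is column-stochastic iff $\phi \circ M = \phi$, so it suffices to prove
\begin{align*}
\phi \circ \Theta(P_\lambda) \;=\; P_\lambda(a_1,\ldots,a_n) \cdot \phi.
\end{align*}
Stochasticity of $T(\alpha)$ is the identity $\phi \circ T(\alpha) = \phi$ (an identity of rational functions in $\alpha, a_i, t$, valid beyond the stochastic regime by analytic continuation), so combining with \eqref{definitionmod} and the generating-function formula \eqref{generatingfunction}
\begin{align*}
\phi \circ \widetilde{T}(\alpha) \;=\; \prod_{i=1}^{n}\frac{1-t\alpha a_i}{1-\alpha a_i}\cdot\phi \;=\; \Bigl(1 + \sum_{k \geq 1} g_k(a_1,\ldots,a_n)\,\alpha^k\Bigr)\phi.
\end{align*}
Comparing with $\widetilde{T}(\alpha) = Id + \sum_{k\geq 1} T_k \alpha^k$ and matching powers of $\alpha$ yields $\phi \circ T_k = g_k(a_1,\ldots,a_n) \cdot \phi$ for every $k \geq 1$; equivalently, $\phi$ is a simultaneous left eigenvector of all the pairwise commuting operators $T_k = \Theta(g_k)$.

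Since $\Lambda_n = \mathbb{R}[g_1,\ldots,g_n]$ and $\Theta$ is a ring homomorphism (its generators $T_k$ commute by Yang-Baxter), the simultaneous-eigenvector property propagates to the entire image: $\phi \circ \Theta(f) = f(a_1,\ldots,a_n)\cdot\phi$ for every $f \in \Lambda_n$. Specializing to $f = P_\lambda$ gives the required column-sum identity, completing the proof. No serious obstacle arises once Theorem \ref{HLpositivity} is in hand; the main conceptual point is that the renormalization \eqref{definitionmod} is engineered precisely so that the total-mass functional $\phi$ realizes the evaluation character $g_k \mapsto g_k(a_1,\ldots,a_n)$ on the image of $\Theta$, which then forces the rescaled $\Theta(P_\lambda)/P_\lambda(a_1,\ldots,a_n)$ to be stochastic.
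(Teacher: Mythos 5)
Your proof is correct and follows essentially the same route as the paper: both arguments apply the total-mass functional (sum of basis coefficients) to $\widetilde{T}(\alpha)$, use stochasticity of $T(\alpha)$ together with \eqref{generatingfunction} to get $\phi\circ T_k = g_k(a_1,\ldots,a_n)\,\phi$, propagate this through the homomorphism $\Theta$ to $P_\lambda$, and invoke Theorem \ref{HLpositivity} plus \eqref{HLTableauSum} for non-negativity of the entries and positivity of the denominator. Your write-up is in fact slightly more careful than the paper's, which leaves the character-propagation step from the generators $g_k$ to general $f\in\Lambda_n$ implicit.
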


\begin{proof}[Proof of the corollary \ref{HLpositivity}] Take $ v \in \{\mathsf{1}, \mathsf{2}\}^{\otimes n}$. Then sum of basis coefficients of $T(\alpha)(v)$ is $1$, since weights of $T(\alpha)$ are stochastic. So sum of basis coefficients of $\widetilde{T}(\alpha)(v)$ is $\prod_{i=1}^{n} \frac{1-t\alpha a_{i}}{1 - \alpha a_{i}} = 1+\sum_{k=1}^{\infty} g_{k}(a_{1}, a_{2}, \ldots, a_{n})\alpha^{k}$ due to \eqref{generatingfunction}. So sum of basis coefficients of $T_{k}(v)$ is $g_{k}(a_{1}, a_{2}, \ldots, a_{n})$. Hence sum of basis coefficients of $T_{k}(v)$ is $P_{\lambda}(a_{1}, a_{2}, \ldots, a_{n})$. Conditions $\lambda'_{1} \leq n$ and $0 \leq t < 1$ guarantee that $P_{\lambda}(a_{1}, a_{2}, \ldots, a_{n}) > 0$ due to formula \eqref{HLTableauSum}, so we can divide by it. 
\end{proof}

To relate theorem \ref{HLpositivity} to proposition \ref{PlacticPositivity} associate $v_{1} \otimes v_{2} \otimes \cdots \otimes v_{n}$ to $S \subseteq \{1, 2, \ldots, n\}$ by including $1 \leq i \leq n$ in $S$ if and only if $v_{i} = \mathsf{1}$. Then one can check that for $t=0$ operator $\widetilde{T}(\alpha)$ turns into action by $Id + \sum_{k=0} \alpha^{k} H_{k}$. Hence $T_{k}^{t=0} = H_{k}$ and proposition \ref{PlacticPositivity} becomes a special case of theorem \ref{HLpositivity}. See also \cite{BBW16}. 

\subsection{Paper outline} In section \ref{Kerov} we explore connection with a problem of Kerov on classifying homomorphisms from the algebra of symmetric functions to $\mathbb{R}$ with non-negative values on Macdonald functions. 
In section \ref{classicRSK} we recall the background on plactic algebra, Schensted's insertions and prove proposition \ref{PlacticPositivity}. In section \ref{tDeformations} we explore $t$-deformations of Schensted insertions and introduce extended vertex models into the picture. In section \ref{Proof} we prove theorem \ref{HLpositivity}. 

\section{Positive homomorphisms}
\label{Kerov}
Denote by $\Lambda$ the algebra of symmetric power series of bounded degree (called {\it symmetric functions}) in countably many variables $x_{1}, x_{2}, x_{3}, \ldots$ over $\mathbb{R}$. For fixed parameters $-1 < q, t < 1$ algebra $\Lambda$ admits two special linear bases of {\it Macdonald functions}: $\left \{P_{\lambda}(x_{1}, x_{2}, x_{3}, \ldots; q, t)  \right \}_{\lambda \in \mathcal{P}}$ and $\left \{Q_{\lambda}(x_{1}, x_{2}, x_{3}, \ldots; q, t)  \right \}_{\lambda \in \mathcal{P}}$. Here $\mathcal{P}$ denotes the set of \emph{partitions}, and  $Q_{\lambda}(q, t) = b_{\lambda}(q, t)P_{\lambda}(q, t)$ for some constant $b_{\lambda}(q, t) > 0$. See \cite{Mac99} for more background on Macdonald functions.
In particular, the one-row Macdonald functions are
\begin{align}
\label{GeneratingCoefficients0}
g_{r}:=Q_{(r)} = \sum_{r_{1}, r_{2}, r_{3}, \ldots \geq 0: \ r_{1}+r_{2}+r_{3}+\cdots = r} \prod_{i \geq 1} \frac{(t; q)_{r_{i}}}{(q, q)_{r_{i}}}x_{i}^{r_{i}}, \quad \text{where} \quad (a; q)_{k}:=\prod_{m=1}^{k}\left(1-aq^{m-1}\right)
\end{align}
is the $q$-{\it Pochhammer symbol}. Also $e_{r} = \sum_{1 < i_{1} < i_{2} < \cdots < i_{r}}x_{i_{1}}x_{i_{2}} \cdots x_{i_{r}}$, the $r$-th \emph{elementary symmetric function} is the same as $P_{1^{r}}$, the one-column Macdonald function. Note that for $q=0$ Macdonald functions $P_{\lambda}$ become the Hall-Littlewood functions, which turn into Hall-Littlewood polynomials $P_{\lambda}$ of subsection \ref{HL polynomials} after setting $x_{i} \to a_{i}$ for $1 \leq i \leq n$ and $x_{i} \to 0$ for $i > n$. Then $g_{r}$ of equality \eqref{GeneratingCoefficients0} becomes $g_{r}$ of equality \eqref{generatingfunction}. Any element of $\Lambda$ can be uniquely represented as a polynomial in $g_{r}$'s.
The following result was conjectured by S.V.~Kerov in \cite[Sec.~7.3]{Ker92} (see also \cite[p.~106]{Ker03}) and proved by the author in \cite{M19}.
\begin{theorem}[\cite{M19}]
\label{theorem:kerovconjecture}
For fixed $-1 < q, t < 1$ a homomorphism $\theta: \Lambda \to \mathbb{R}$ has the property that $\theta(P_{\lambda}) \geq 0$ for any partition $\lambda \in \mathcal{P}$ (is Macdonald-positive) if and only if it is defined by the generating function
\begin{align}
\label{positivehomsdef}
1+\sum_{r=1}^{\infty} \theta(g_{r})z^{r} = e^{\gamma z} \cdot \prod_{i=1}^{\infty} \frac{(t \alpha_{i} z; q)_{\infty}}{(\alpha_{i} z; q)_{\infty}} \cdot \prod_{j=1}^{\infty} \left(1 + \beta_{j}z \right)
\end{align}
for some $\alpha_{i}, \beta_{j}, \gamma \geq 0$, such that $\displaystyle \sum_{i=1}^{\infty} \alpha_{i} + \sum_{j=1}^{\infty} \beta_{j} < \infty$.
\end{theorem}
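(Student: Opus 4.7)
The plan is to address the two implications separately. For the ``if'' direction, the strategy is to show that each of the three families of factors in \eqref{positivehomsdef} gives a Macdonald-positive homomorphism, and that such positivity is preserved under the pointwise product coming from the coproduct of $\Lambda$ and under pointwise limits. The factor $\prod_i (t\alpha_i z; q)_\infty/(\alpha_i z; q)_\infty$ corresponds to the plain evaluation $x_i \mapsto \alpha_i$ of the Macdonald generating function \eqref{GeneratingCoefficients0}, and non-negativity of $P_\lambda(\alpha_1, \alpha_2, \ldots; q, t)$ at such points follows from the tableaux formula for Macdonald polynomials. The linear factors $\prod_j (1+\beta_j z)$ are handled via the involution $\omega_{q,t}$ that relates $P_\lambda$ and $P_{\lambda'}$ and converts evaluations into dual evaluations; equivalently, $(1+\beta z)$ is the generating function for the $e_r$'s. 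Finally, the exponential $e^{\gamma z}$ appears as a limit of suitable normalized principal specializations (a Macdonald analogue of the Plancherel specialization).

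For the ``only if'' direction, the starting point is the Macdonald Jacobi--Trudi identity expressing $Q_\lambda$ as a determinant in the $g_r = Q_{(r)}$. Applied to a positive homomorphism $\theta$, this forces each $\det\bigl[\,c_{\lambda_i - i + j}\,\bigr]$ with $c_r := \theta(g_r) \geq 0$ to be non-negative, and a parallel constraint from transposed shapes applied to $e_r = P_{1^r}$ should yield that the Toeplitz matrix built from the $c_r$ is totally non-negative in the classical sense. By the Edrei--Aissen--Schoenberg--Whitney theorem, such a sequence must have generating function of the form $e^{\gamma z} \prod_j (1+\beta_j z)/\prod_i (1-\alpha_i z)$ for non-negative parameters with the required finiteness. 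This already recovers \eqref{positivehomsdef} in the Schur specialization $q = t$.

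The main obstacle is to upgrade this classical factorization to the $(q,t)$-form: each geometric factor $(1-\alpha_i z)^{-1}$ must in fact assemble into $(t\alpha_i z; q)_\infty/(\alpha_i z; q)_\infty$, a strictly stronger factorization encoding additional $q$-deformed positivity. I would try to isolate the $(q,t)$-content by probing $\theta$ on rectangular partitions $(r^N)$ and letting $N \to \infty$, exploiting the product form of $P_{(r^N)}$ under geometric specializations; any limit point of $\tfrac{1}{N}\log \theta(P_{(r^N)})$ should detect one $\alpha_i$ together with its full $q$-tower. A dual analysis for long single-column partitions detects the $\beta_j$'s via $\omega_{q,t}$, and a residual analysis, after stripping all point masses, should reduce the remainder to a single non-negative scalar $\gamma$ controlling the exponential factor. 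The technical heart lies in ruling out exotic extreme rays of the positive cone and in making the asymptotic analysis uniform in $(q,t)$; this is where I expect the real work to lie, and where \cite{M19} presumably develops the main new tools beyond the Edrei--Schoenberg template.
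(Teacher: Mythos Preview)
The paper does not contain a proof of this theorem: it is stated as a result proved elsewhere, in \cite{M19}, and used here only as input for Corollary~\ref{FiniteKerov}. So there is no proof in the present paper to compare your proposal against.

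As for the proposal itself, it is a plan rather than a proof, and you explicitly acknowledge as much in the final paragraph. The ``if'' direction sketch is essentially the standard argument and is fine. The ``only if'' direction, however, has a genuine gap already at the first step: there is no Macdonald Jacobi--Trudi identity of the form $Q_\lambda = \det[g_{\lambda_i - i + j}]$. The known Jacobi--Trudi-type expansions for Macdonald polynomials (e.g.\ \cite{Las06}) involve raising operators or signed correction terms, so Macdonald-positivity of $\theta$ does not immediately yield that the Toeplitz matrix $[\theta(g_{i-j})]$ is totally non-negative in the classical sense. Consequently, you cannot invoke the Edrei--Aissen--Schoenberg--Whitney theorem as a starting point and then ``upgrade'' its output. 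The actual argument in \cite{M19} does not proceed by reducing to the classical Edrei theorem; it develops a direct analysis of the extreme rays of the Macdonald-positive cone via an approximation scheme and a diagonalization argument specific to the $(q,t)$ setting, which is substantially different from the template you outline.
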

For $q=t$ both functions $P_{\lambda}$ and $Q_{\lambda}$ become the Schur function $S_{\lambda}$. The corresponding special case of Theorem \ref{theorem:kerovconjecture} is known as the \emph{Edrei-Thoma theorem}. It was first conjectured in \cite{Sch48} and then proved in a series of papers  \cite{ASW52}, \cite{Edr52}, \cite{Whit52} in the language of classifying infinite totally non-negative upper triangular Toeplitz matrices.  It was independently discovered and proved in \cite{Th64} in the context of classifying characters of the \emph{infinite symmetric group} $S_{\infty}$. See \cite{BO17} for more details on the representation theory of the infinite symmetric group.

\begin{corollary} 
\label{FiniteKerov}
Suppose $0 \leq t < 1$. Homomorphism $\theta:\Lambda_{n} \to \mathbb{R}$ has non-negative values on all Hall-Littlewood symmetric polynomials if and only if it is given by setting $\theta(a_{i}):= \alpha_{i} \geq 0$ and restricting to $\Lambda_{n}$. 
\end{corollary}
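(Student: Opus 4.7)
The implication from non-negative evaluation to Hall--Littlewood positivity is immediate: if $\theta(a_i) = \alpha_i \geq 0$, then $\theta(P_{\lambda}) = P_{\lambda}(\alpha_1, \ldots, \alpha_n) \geq 0$ by Proposition \ref{HLBasicPositivity}, since $P_{\lambda}$ has non-negative coefficients for $0 \leq t < 1$.

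For the converse, my plan is to extend $\theta$ and apply Theorem \ref{theorem:kerovconjecture}. Let $\pi: \Lambda \to \Lambda_n$ be the projection killing $x_i$ for $i > n$, and set $\tilde\theta := \theta \circ \pi$. Since $\pi(P_{\lambda})$ equals $P_{\lambda}(a_1, \ldots, a_n; t)$ when $\lambda$ has at most $n$ parts and $0$ otherwise, the hypothesis gives $\tilde\theta(P_{\lambda}) \geq 0$ for every partition $\lambda$. Theorem \ref{theorem:kerovconjecture} specialized at $q = 0$ then produces non-negative summable parameters $\gamma$, $\{\alpha_i\}$, $\{\beta_j\}$ with
$$F(z) := 1 + \sum_{r \geq 1} \tilde\theta(g_r)\, z^r \;=\; e^{\gamma z} \prod_{i \geq 1} \frac{1 - t\alpha_i z}{1 - \alpha_i z} \prod_{j \geq 1} (1 + \beta_j z).$$

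On the other hand, since $\Lambda_n \cong \mathbb{R}[e_1, \ldots, e_n]$, the homomorphism $\theta$ is simply evaluation of symmetric polynomials at the multiset of complex roots $c_1, \ldots, c_n$ of $x^n - \theta(e_1)x^{n-1} + \cdots + (-1)^n \theta(e_n)$, a multiset closed under complex conjugation. By \eqref{generatingfunction} this yields $F(z) = \prod_{i=1}^{n} (1 - tc_i z)/(1 - c_i z)$, a rational function of $z$. Rationality forces $\gamma = 0$ and both infinite Kerov products to be finite. The $\beta_j$-factor must also trivialize: for $t > 0$, the limit $F(z) \to t^{K}$ with $K = \#\{c_i \neq 0\}$ as $z \to \infty$ stays bounded, while a non-constant $\prod_j (1 + \beta_j z)$ would make $F$ unbounded; for $t = 0$, $F(z) = \prod (1 - c_i z)^{-1}$ has no zeros at all, whereas each $\beta_j > 0$ would contribute one. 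Either way $\beta_j = 0$ for every $j$, leaving
$$\prod_{i=1}^{n} \frac{1 - tc_i z}{1 - c_i z} \;=\; \prod_{i=1}^{M} \frac{1 - t\alpha_i z}{1 - \alpha_i z}, \qquad \alpha_i > 0.$$

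Finally, taking logarithms gives $\sum_{k \geq 1} \frac{1 - t^k}{k} p_k(c)\, z^k = \sum_{k \geq 1} \frac{1 - t^k}{k} p_k(\alpha)\, z^k$; since $1 - t^k > 0$ for $0 \leq t < 1$ and $k \geq 1$, we deduce $p_k(c) = p_k(\alpha)$ for every $k \geq 1$. Two finite multisets of nonzero complex numbers with all power sums equal must coincide, so $\{c_i : c_i \neq 0\} = \{\alpha_1, \ldots, \alpha_M\}$ as multisets. Hence every $c_i$ is a non-negative real and $\theta$ is evaluation at $(c_1, \ldots, c_n) \in \mathbb{R}_{\geq 0}^n$, as claimed. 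The main obstacle is this concluding matching step—excluding complex and negative real values of $c_i$—and the power-sum identification provides a clean route that sidesteps a more cumbersome pole-zero chain analysis of the two rational forms.
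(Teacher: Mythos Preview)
Your overall strategy is sound and differs from the paper's in an instructive way. The paper composes with the duality $w_{t,0}$ so that the Kerov generating function becomes the \emph{polynomial} $1+\sum_{r=1}^{n}\theta(e_r)z^r$; since every factor on the Kerov side then has nonnegative power-series coefficients, the degree bound $\le n$ instantly forces $\gamma=0$, no $\alpha_i$, and at most $n$ nonzero $\beta_j$. You instead apply Theorem~\ref{theorem:kerovconjecture} directly at $q=0$, which keeps the generating function $F(z)=\prod_i(1-tc_iz)/(1-c_iz)$ merely rational and therefore requires a more delicate pole/zero analysis.

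There is, however, a genuine gap in that analysis. You assert that rationality of $F$ forces $\gamma=0$ and both Kerov products to be finite, but for the $\alpha$-product this is false at $q=0$: the telescoping sequence $\alpha_k=t^{k-1}a$ (with $0<t<1$) gives $\prod_{k\ge1}\frac{1-t\alpha_kz}{1-\alpha_kz}=\frac{1}{1-az}$, a rational function built from infinitely many $\alpha$'s. So the Kerov parameters at $q=0$ are not unique, and rationality alone does not bound their number; your step~2 (controlling $F$ at infinity via a finite $\alpha$-product) then rests on an unjustified premise. The argument is repairable along your route: simply note that the Kerov form forces every pole of $F$ to lie at some $1/\alpha_i>0$, and run a chain argument on the rational side---if some nonzero $c_i\notin\mathbb{R}_{>0}$, its pole at $1/c_i$ can only be cancelled by a numerator zero $1/(tc_j)$, forcing $c_j=c_i/t\notin\mathbb{R}_{>0}$, and iterating yields infinitely many distinct $c$'s inside $\{c_1,\dots,c_n\}$, a contradiction. (For $t=0$ the conclusion is immediate since $F$ has no zeros.) This gives $c_i\ge0$ directly and renders the power-sum step unnecessary.
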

\begin{proof}[Proof of corollary \ref{FiniteKerov}]
$\theta$ defined by by setting $\theta(a_{i}):= \alpha_{i} \geq 0$ is non-negative on all Hall-Littlewood polynomials due to equality \eqref{HLTableauSum}. For the reverse direction first define $\pi: \Lambda \to \Lambda_{n}$ by setting $\pi(x_{i}) = a_{i}$ for $1 \leq i \leq n$, $\pi(x_{i}) = 0$ for $i > n$ and restricting to symmetric functions. $\pi(e_{r}) = e_{r}(a_{1}, a_{2}, \ldots, a_{n})$ for $1\leq r \leq n$, $\pi(e_{r}) = 0$ for $r > n$. 
There is a homomorphism $w_{t, 0}: \Lambda \to \Lambda$ sending the $t$-Whittaker function $P_{\lambda}(t, 0)$ to the Hall-Littlewood function $Q_{\lambda'}(0, t) = b_{\lambda'}(0, t)P_{\lambda'}(0, t)$ for a conjugate partition $\lambda'$, as well as sending $Q_{\lambda}(t, 0)$ to $P_{\lambda}(t, 0)$, see \cite{Mac99} for details. Suppose a homomorphism $\theta:\Lambda_{n} \to \mathbb{R}$ has non-negative values on all Hall-Littlewood symmetric polynomials.  Define $\tilde{\theta}: \Lambda \to \mathbb{R}$ by $\tilde{\theta} = \theta \circ \pi \circ w_{t, 0}$. Then $\tilde{\theta}$ takes non-negative values on all $P_{\lambda}(t, 0)$, so by theorem \ref{theorem:kerovconjecture} it can be defined by the generating function
\begin{align*}
1+\sum_{r=1}^{\infty} \tilde{\theta}(g_{r}(t, 0))z^{r} = e^{\gamma z} \cdot \prod_{i=1}^{\infty} \frac{1}{(\alpha_{i} z; t)_{\infty}} \cdot \prod_{j=1}^{\infty} \left(1 + \beta_{j}z \right)
\end{align*}
The left hand side of this equality is $1+\sum_{r=1}^{\infty} \tilde{\theta}(g_{r}(t, 0))z^{r} = 1 + \sum_{r=1}^{n}\theta(e_{r}(a_{1}, a_{2}, \ldots, a_{n}))z^{r}$. In particular, the coefficient of $z^{m}$ in it is $0$ for $m > n$. Note that we have  
\begin{align*}
\frac{1}{(\alpha_{i} z; t)_{\infty}} = \sum_{k=0}^{\infty}\frac{\alpha_{i}^{k}z^{k}}{(t; t)_{k}} 
\end{align*}
by the $t$-Gauss summation formula. So if either $\gamma > 0$, or at least one $\alpha_{i} > 0$, or there were more than $n$ non-zero $\beta_{j}$'s, then $1+\sum_{r=1}^{\infty} \tilde{\theta}(g_{r}(t, 0))z^{r}$ would contain powers of $z$ higher than $n$ with strictly positive coefficients. That would be a contradiction. Hence $$1 + \sum_{r=1}^{n}\theta(e_{r}(a_{1}, a_{2}, \ldots, a_{n}))z^{r} = \prod_{j=1}^{n} \left(1 + \beta_{j}z \right).$$ Relabel $\beta_{j}$ as $\alpha_{j}$. Then $\theta(e_{r}(a_{1}, a_{2}, \ldots, a_{n})) = e_{r}(\alpha_{1}, \alpha_{2}, \ldots, \alpha_{n})$.  So $\theta$ is defined by setting $\theta(a_{i}):= \alpha_{i} \geq 0$ and restricting to $\Lambda_{n}$. 

\end{proof}
So theorem \ref{HLpositivity} can be viewed as a multidimensional generalization of the easier direction of corollary \ref{FiniteKerov}. This raises a question: what would be the right multidimensional generalization of the harder direction?

\section{Plactic algebra and Schensted's insertions}
\label{classicRSK}
\subsection{Signatures, tableaux and particle arrays}
A \emph{signature}\footnote{These objects 
are also sometimes called \emph{highest weights} as they are the highest weights
of irreducible representations of the unitary group
$U(n)$.} of length $n\ge1$
is a non-increasing collection of integers 
$\la=(\la_1\ge \ldots\ge\la_n)\in\Z^{n}$.
We will work with signatures which have only nonnegative 
parts, i.e., $\la_n\ge0$ (in which case they are also called \emph{partitions}).
Denote the set of all such objects by $\GT_n$.
Let also $\GT:=\bigcup_{n=1}^{\infty}\GT_n$,
with the understanding that we identify
$\la\cup0=(\la_1,\ldots,\la_n,0,0,\ldots,0)\in\GT_{n+m}$
($m$ zeros)
with $\la\in\GT_n$
for any $m\ge1$.

We will use two ways to depict signatures (see Fig.~\ref{fig:YD}):
\begin{enumerate}
	\item Any signature $\la\in\GT_n$ 
	can be identified with a \emph{Young diagram}
	(having at most $n$ rows)
	as in \cite[I.1]{Mac99}.
	\item A signature $\la\in\GT_n$ can also be represented
	as a configuration of $n$ particles on $\Z_{\ge0}$
	(with the understanding that there can be more than one particle
	at a given location).
\end{enumerate}
We denote by $|\la|:=\sum_{i=1}^{n}\la_i$ the number of 
boxes in the corresponding Young diagram,
and by $\ell(\la)$ the number of nonzero parts in $\la$ 
(which is finite for all $\la\in\GT$).
For $\mu,\la\in\GT$ we
will write $\mu\subseteq\la$
if $\mu_i\le\la_i$ for all $i \in \Z_{>0}$. 
In this case, the set difference 
of Young diagrams $\la$ and $\mu$
is denoted by $\la/\mu$ and is called a \emph{skew Young diagram}.

Two signatures $\mu,\la\in\GT$ are said to \emph{interlace}
if one can append them by zeros such that
$\mu\in\GT_{n-1}$ and $\la\in\GT_n$
for some $N$, and
\begin{align}\label{interlace}
	\la_1\ge\mu_1\ge\la_2\ge\mu_2 \ge\ldots\ge
	\la_{n-1}\ge\mu_{n-1}\ge
	\la_{n}.
\end{align}
In terms of Young diagrams, this means that
$\la$ is obtained from $\mu$
by adding a \emph{horizontal strip} (or, equivalently, that
\emph{the skew diagram $\la/\mu$ is a horizontal strip} which is,
by definition, a skew Young diagram having at most one box in each 
vertical column),
and we denote this by $\mu\prech\la$.

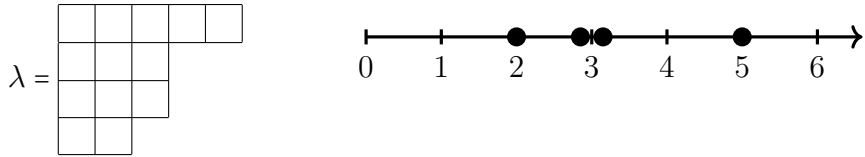
\begin{figure}[htbp]
\begin{equation*}
	\la=\begin{array}{|c|c|c|c|c|}
	    \hline
	    \ &\ &\ &\ &\ \\
	    \hline 
	    \ &\ &\ \\
	    \cline{1-3}
	    \ &\ &\ \\
		\cline{1-3}
		\ &\ \\
		\cline{1-2}
	\end{array}
	\hspace{40pt}
	\begin{tikzpicture}
		[scale=1, very thick]
		\draw[->] (0,0) -- (6.6,0);
		\foreach \h in {0,1,2,3,4,5,6}
		{
			\draw (\h,.1)--++(0,-.2) node [below] {$\h$};
		}
		\draw[fill] (2,0) circle (3pt);
		\draw[fill] (3-.15,0) circle (3pt);
		\draw[fill] (3+.15,0) circle (3pt);
		\draw[fill] (5,0) circle (3pt);
	\end{tikzpicture}
\end{equation*}
\caption{Young diagram $\la=(5,3,3,2)\in\GT_4$,
and the corresponding particle configuration.
Note that there are two particles at location 3.}
\label{fig:YD}
\end{figure}

Let $\la'$ denote the transposition of the 
Young diagram $\la$. For the diagram on
Fig.~\ref{fig:YD}, we have $\la'=(4,4,3,1,1)$.
If $\la/\mu$ is a horizontal strip, then $\la'/\mu'$
is called a \emph{vertical strip}.
We will denote the corresponding relation by $\mu' \precv\la'$.
\begin{definition}
A \emph{Gelfand--Tsetlin array} (sometimes also referred to as \emph{scheme}, or \emph{pattern}) of depth $n$ is a sequence of interlacing signatures
$\lab=(\la^{(1)}\prech\la^{(2)}\prech \ldots\prech \la^{(n)})$, where $\la^{(j)}\in\GT_j$.
\end{definition} 
Such sequences first appeared in connection with 
representation theory of unitary groups
\cite{GC50}.\footnote{This justifies the notation ``$\mathbb{GT}$'' we are using. Tsetlin and Cetlin are different English spellings of the same last name.}
We will depict sequences $\lab$ as interlacing integer arrays,
and also associate to them configurations of 
particles 
$\{(\la^{(k)}_j,k)\colon k=1,\ldots,n,\; j=1,\ldots,k\}$
on $n$ horizontal copies of $\Z_{\ge 0}$. See Fig.~\ref{fig:array}.
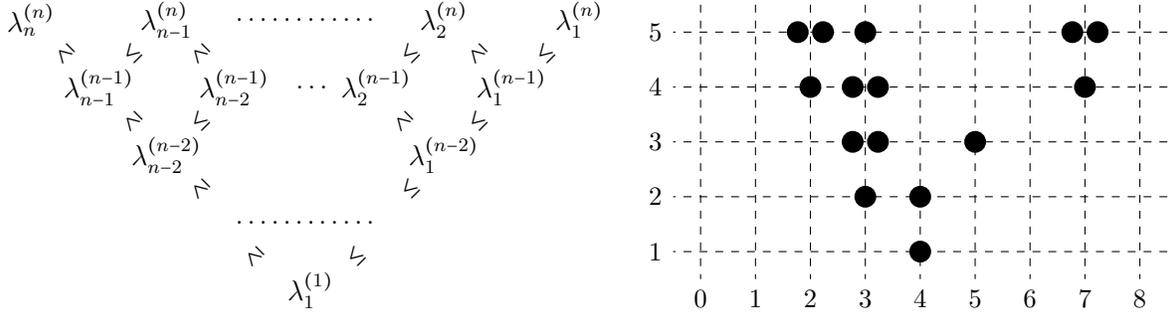
\begin{figure}[htbp]
	\begin{tabular}{cc}
	\begin{adjustbox}{max width=.48\textwidth}
		\begin{tikzpicture}[scale=1]
			\def\h{1}
			\def\x{1.9}
			\node at (-5,5) {$\la_n^{(n)}$};
			\node at (-3,5) {$\la_{n-1}^{(n)}$};
			\node at (0-\x/2,5) {$\ldots\ldots\ldots\ldots$};
			\node at (3-\x,5) {$\la_{2}^{(n)}$};
			\node at (5-\x,5) {$\la_{1}^{(n)}$};
			\node at (-4,5-\h) {$\la_{n-1}^{(n-1)}$};
			\node at (-2,5-\h) {$\la_{n-2}^{(n-1)}$};
			\node at (0-\x/2+.1,5-\h) {$\ldots$};
			\node at (2-\x,5-\h) {$\la_{2}^{(n-1)}$};
			\node at (4-\x,5-\h) {$\la_{1}^{(n-1)}$};
			\node at (-3,5-2*\h) {$\la_{n-2}^{(n-2)}$};
			\node at (3-\x,5-2*\h) {$\la_{1}^{(n-2)}$};
			\foreach \LePoint in {(4.5-\x,5-\h/2),(2.5-\x,5-\h/2),(-3.5,5-\h/2),
			(-2.5,5-3*\h/2),(3.5-\x,5-3*\h/2),(2.5-\x,5-5*\h/2),(1.7-\x,5-7*\h/2)} {
				\node [rotate=45] at \LePoint {$\le$};
			};
			\foreach \GePoint in {(3.5-\x,5-\h/2),(-4.5,5-\h/2),(-2.5,5-\h/2),
			(-3.5,5-3*\h/2),(-2.5,5-5*\h/2),(.5,5-3*\h/2),(-1.7,5-7*\h/2)} {
				\node [rotate=135] at \GePoint {$\ge$};
			};
			\node at (0-\x/2,5-3*\h) {$\ldots\ldots\ldots\ldots$};
			\node at (0-\x/2+.08,5-4*\h) {$\la_1^{(1)}$};
		\end{tikzpicture}
	\end{adjustbox}
	&
	\begin{tikzpicture}[scale = .73]
			\tikzset{every node/.style={
				font=\footnotesize
			}}
			\foreach \yy in {1,2,3,4,5}
			{
				\draw[dashed] (8.5, \yy) -- (-0.5,\yy) node[left] {\yy};
			};
			\foreach \xxy in {0,1,2,3,4,5,6,7,8}
			{
				\draw[dashed]  (\xxy,5.5) -- (\xxy,.5) node[below] {\xxy};
			};
			\def\x{0.16}
			\def\y{0.23}
			\def\ys{-1}
			\def\xs{7}
			\def\xss{-3}
			\draw[ultra thick, fill] (4,1) circle(\x);
			\draw[ultra thick, fill] (3,2) circle(\x);
			\draw[ultra thick, fill] (4,2) circle(\x);
			\draw[ultra thick, fill] (3-\y,3) circle(\x);
			\draw[ultra thick, fill] (3+\y,3) circle(\x);
			\draw[ultra thick, fill] (5,3) circle(\x);
			\draw[ultra thick, fill] (2,4) circle(\x);
			\draw[ultra thick, fill] (3-\y,4) circle(\x);
			\draw[ultra thick, fill] (3+\y,4) circle(\x);
			\draw[ultra thick, fill] (7,4) circle(\x);
			\draw[ultra thick, fill] (2-\y,5) circle(\x);
			\draw[ultra thick, fill] (2+\y,5) circle(\x);
			\draw[ultra thick, fill] (3,5) circle(\x);
			\draw[ultra thick, fill] (7-\y,5) circle(\x);
			\draw[ultra thick, fill] (7+\y,5) circle(\x);
		\end{tikzpicture}
	\end{tabular}
	\caption{Left: An interlacing array $\lab$; 
	we require that $\la^{(j)}_{i}\in\Z_{\ge0}$.
	Right:
	A configuration of particles corresponding to
	an interlacing array of depth $n=5$ (right).}
	\label{fig:array}
\end{figure}

Let us denote the set of all interlacing
arrays $\lab$ of depth $n$ with top level $\la$ by $\mathbb{GT}^{(n)}(\la)$. Let $\mathbb{GT}^{(n)}:= \bigcup_{\la \in \GT_n}\mathbb{GT}^{(n)}(\la)$.
\begin{definition}
A \emph{semistandard Young tableau} of shape $\la$ is a filling in the boxes of the Young diagram $\la$ with positive integers, which increase weakly along rows, and strictly down columns.
\end{definition}
There is a natural correspondence between the Gelfand-Tsetlin arrays of depth $n$ and the semistandard Young tableaux filled with numbers from $1$ to $n$. Indeed, given $\lab \in \mathbb{GT}^{(n)}$ we can produce a semistandard Young tableau of shape $\la^{(n)}$ by filling $\la^{(j)}/\la^{(j-1)}$ with numbers equal to $j$, see Fig.~\ref{fig:SSYT}. Thus, by a slight abuse of notation we will also use $\mathbb{GT}^{(n)}(\la)$ to denote the set of semistandard Young tableaux of shape $\la$ filled with numbers from $1$ to $n$.

\begin{figure}[htbp]
	\begin{adjustbox}{max height=.2\textwidth}
	\begin{tikzpicture}[scale = .5, thick]
		\draw (0,0) --++ (7,0);
		\draw (0,-1) --++ (7,0);
		\draw (0,-2) --++ (7,0);
		\draw (0,-3) --++ (3,0);
		\draw (0,-4) --++ (2,0);
		\draw (0,-5) --++ (2,0);
		\draw (0,0) --++ (0,-5);
		\draw (1,0) --++ (0,-5);
		\draw (2,0) --++ (0,-5);
		\draw (3,0) --++ (0,-3);
		\draw (4,0) --++ (0,-2);
		\draw (5,0) --++ (0,-2);
		\draw (6,0) --++ (0,-2);
		\draw (7,0) --++ (0,-2);
		\def\xxxx{.05}
		\node[anchor=west] at (\xxxx,-.5) {1};
		\node[anchor=west] at (\xxxx+1,-.5) {1};
		\node[anchor=west] at (\xxxx+2,-.5) {1};
		\node[anchor=west] at (\xxxx+3,-.5) {1};
		\node[anchor=west] at (\xxxx+4,-.5) {3};
		\node[anchor=west] at (\xxxx+5,-.5) {4};
		\node[anchor=west] at (\xxxx+6,-.5) {4};
		\node[anchor=west] at (\xxxx,-1.5) {2};
		\node[anchor=west] at (\xxxx+1,-1.5) {2};
		\node[anchor=west] at (\xxxx+2,-1.5) {2};
		\node[anchor=west] at (\xxxx+3,-1.5) {5};
		\node[anchor=west] at (\xxxx+4,-1.5) {5};
		\node[anchor=west] at (\xxxx+5,-1.5) {5};
		\node[anchor=west] at (\xxxx+6,-1.5) {5};
		\node[anchor=west] at (\xxxx,-2.5) {3};
		\node[anchor=west] at (\xxxx+1,-2.5) {3};
		\node[anchor=west] at (\xxxx+2,-2.5) {3};
		\node[anchor=west] at (\xxxx,-3.5) {4};
		\node[anchor=west] at (\xxxx+1,-3.5) {4};
		\node[anchor=west] at (\xxxx,-4.5) {5};
		\node[anchor=west] at (\xxxx+1,-4.5) {5};
	\end{tikzpicture}
	\end{adjustbox}
	\caption{A semistandard Young tableau corresponding
	to the array on Fig.~\ref{fig:array}, right.}
	\label{fig:SSYT}
\end{figure}
\subsection{Schensted's insertions and interacting particle systems}

Schensted's row and column insertions (\cite{Sch61}) are combinatorial constructions serving as the building blocks of the RSK algorithms, see \cite{K70}, \cite{F97}. Each insertion can be described in the language of semistandard Young tableaux as a sequence of row and column bumpings. In the language of interlacing arrays these bumpings correspond to elementary operations
of deterministic long-range pulling and pushing, which involve only two consecutive levels of an array.
\begin{definition}(Deterministic long-range pulling, Fig.~\ref{fig:pulling})
	\label{def:pull}
	\smallskip
	
	Let $j=2,\ldots,n$, 
	and signatures 
	$\bar\la,\bar\nu\in\GT_{j-1}$, 
	$\la\in\GT_{j}$ satisfy $\bar\la\prech \la$, 
	$\bar\nu=\bar\la+\bar{\mathrm{e}}_{i}$,
	where $\bar{\mathrm{e}}_{i}=(0,0,\ldots,0,1,0,\ldots,0)$ (for some $i=1,\ldots,j-1$) 
	is the $i$th basis vector of length $j-1$.
	Define $\nu\in\GT_j$ to be
	\begin{align*}
		\nu=\mathsf{pull}(\la\mid \bar\la\to\bar\nu):=
		\begin{cases}
			\la+\mathrm{e}_{i},&\text{if $\bar\la_{i}=\la_i$};\\
			\la+\mathrm{e}_{i+1},&\text{otherwise}.
		\end{cases}
	\end{align*}
	Here $\mathrm{e}_{i}$ and $\mathrm{e}_{i+1}$
	are basis vectors of length $j$.

	In words, the particle $\bar\la_i$
	at level $j-1$  which moved to the right by one
	generically pulls its upper left neighbor $\la_{i+1}$, or 
	pushes it upper right neighbor $\la_i$
	if the latter operation is needed to preserve the interlacing.
	Note that the long-range pulling mechanism
	does not encounter any blocking issues.
\end{definition}
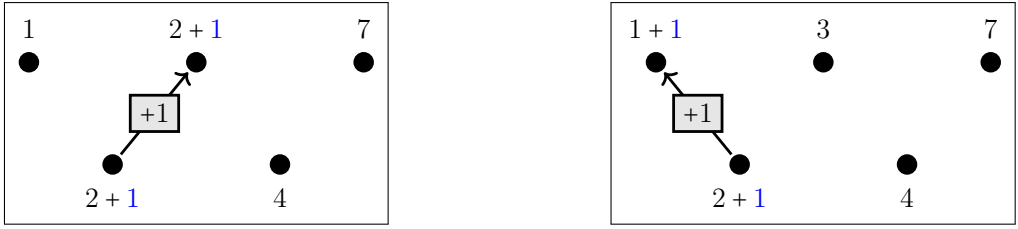
\begin{figure}[htbp]
	\begin{tabular}{cc}
	\framebox{\begin{adjustbox}{max height=.16\textwidth}\begin{tikzpicture}
		[scale=.7, thick]
		\def\cir{.2}
		\def\ysh{6}
		\def\x{1.8}
		\def\y{2.2}
		\draw[fill] (4*\x,0) circle(\cir) node [below,yshift=-\ysh] {$2+{\color{blue}1}$};
		\draw[fill] (6*\x,0) circle(\cir) node [below,yshift=-\ysh] {$4$};
		\draw[fill] (3*\x,1*\y) circle(\cir) node [above,yshift=\ysh] {$1$};
		\draw[fill] (5*\x,1*\y) circle(\cir) node [above,yshift=\ysh] {$2+{\color{blue}1}$};
		\draw[fill] (7*\x,1*\y) circle(\cir) node [above,yshift=\ysh] {$7$};
		\node (jb) at (4*\x,0) {};
		\node (pa) at (5*\x,\y) {};
		\draw[->,very thick] (jb) -- (pa)
		node[rectangle,draw=black,fill=gray!20!white] [midway] {$+1$};
	\end{tikzpicture}
	\end{adjustbox}}
	\hspace{70pt}
	&
	\framebox{\begin{adjustbox}{max height=.16\textwidth}\begin{tikzpicture}
		[scale=.7, thick]
		\def\cir{.2}
		\def\ysh{6}
		\def\x{1.8}
		\def\y{2.2}
		\draw[fill] (4*\x,0) circle(\cir) node [below,yshift=-\ysh] {$2+{\color{blue}1}$};
		\draw[fill] (6*\x,0) circle(\cir) node [below,yshift=-\ysh] {$4$};
		\draw[fill] (3*\x,1*\y) circle(\cir) node [above,yshift=\ysh] {$1+{\color{blue}1}$};
		\draw[fill] (5*\x,1*\y) circle(\cir) node [above,yshift=\ysh] 
		{$3$};
		\draw[fill] (7*\x,1*\y) circle(\cir) node [above,yshift=\ysh] {$7$};
		\node (jb) at (4*\x,0) {};
		\node (pa) at (3*\x,\y) {};
		\draw[->,very thick] (jb) -- (pa)
		node[rectangle,draw=black,fill=gray!20!white] [midway] {$+1$};
	\end{tikzpicture}\end{adjustbox}}
	\end{tabular}
	\caption{An example of pulling mechanism
	for $i=2$
	at levels 2 and 3 (i.e., $j=3$).
	Left: $\bar\la_2=\la_2$,
	which forces the pushing of the upper right neighbor.
	Right: in the generic
	situation $\bar\la_2<\la_2$ the upper left neighbor is pulled.}
	\label{fig:pulling}
\end{figure}

\begin{definition}(Deterministic long-range pushing, Fig.~\ref{fig:pushing})
	\label{def:push}
	As in the previous definition, 
	let
	$j=2,\ldots,n$, 
	$\bar\la,\bar\nu\in\GT_{j-1}$, 
	$\la\in\GT_{j}$ be such that $\bar\la\prech \la$
	and
	$\bar\nu=\bar\la+\bar{\mathrm{e}}_{i}$.
	Define $\nu\in\GT_j$ to be
	\begin{align*}
		\nu=\mathsf{push}(\la\mid \bar\la\to\bar\nu):=
		\la+\mathrm{e}_{m},
		\qquad
		\text{where
		$m=\max\{p\colon 
		1\le p\le i\text{ and }\la_{p}<\bar\la_{p-1}\}$}.
	\end{align*}

	In words, the particle $\bar\la_i$ 
	at level $j-1$
	which moved to the right by one,
	pushes its first upper right neighbor $\la_m$
	which is not blocked (and therefore is free to move
	without violating the interlacing).
	Generically (when all particles are sufficiently
	far apart) $\la_m=\la_i$, so the immediate upper right
	neighbor is pushed.
\end{definition}
\begin{remark}[Move donation]\label{rmk:move_donation}
	It is useful to equivalently 
	interpret the mechanism of Definition
	\ref{def:push} in a slightly different way.
	Namely, let us say that 
	when the particle $\bar\la_i$
	at level $j-1$ moves, it gives the 
	particle $\la_i$ at level $j$ a \emph{moving impulse}.
	If $\la_i$ is blocked (i.e., if $\la_i=\bar\la_{i-1}$),
	this moving impulse is \emph{donated} to the next
	particle $\la_{i-1}$ to the right of $\la_i$. If $\la_{i-1}$
	is blocked, too, then the impulse is donated further, and so on.
	Note that the particle $\la_{1}$ cannot be blocked, so
	this moving impulse will always result in an actual move.
\end{remark}

\begin{figure}[htbp]
	\begin{adjustbox}{max height=.17\textwidth}
	\begin{tikzpicture}
		[scale=.7, thick]
		\def\cir{.2}
		\def\ysh{6}
		\def\x{1.8}
		\def\y{2.2}
		\draw[fill] (0,0) circle(\cir) node [below,yshift=-\ysh] {$1$};
		\draw[fill] (2*\x,0) circle(\cir) node [below,yshift=-\ysh] {$2+
		{\color{blue}1}$};
		\draw[fill] (4*\x,0) circle(\cir) node [below,yshift=-\ysh] {$4$};
		\draw[fill] (6*\x,0) circle(\cir) node [below,yshift=-\ysh] {$6$};
		\draw[fill] (-1*\x,1*\y) circle(\cir) node [above,yshift=\ysh] 
		{$0$};
		\draw[fill] (1*\x,1*\y) circle(\cir) node [above,yshift=\ysh] 
		{$1$};
		\draw[fill] (3*\x,1*\y) circle(\cir) node [above,yshift=\ysh] 
		{$4$};
		\draw[fill] (5*\x,1*\y) circle(\cir) node [above,yshift=\ysh-1.9] {$6$};
		\draw[fill] (7*\x,1*\y) circle(\cir) node [above,yshift=\ysh-1.9] {$7+{\color{blue}1}$};
		\node at (10*\x,0) {$\bar\la+({\color{blue}\bar\nu-\bar\la})$};
		\node at (10*\x,\y) {$\la+({\color{blue}\nu-\la})$};
		\node (sb) at (4*\x,0) {};
		\node (ba) at (3*\x,\y) {};
		\node (sb1) at (6*\x,0) {};
		\node (ba1) at (5*\x,\y) {};
		\node (la1) at (7*\x,\y) {};
		\node (lab3) at (2*\x,0) {};
		\draw[->,very thick] (lab3) .. controls 
		(4*\x,-\y/3)
		and
		(5*\x,4/3*\y)
		.. (la1) 
		node[rectangle,draw=black,fill=gray!20!white] [midway] {$+1$};
		\draw[very thick,dotted] (sb) -- (ba)
		node[rectangle,draw=black,fill=gray!20!white] [midway] {block};
		\draw[very thick,dotted] (sb1) -- (ba1)
		node[rectangle,draw=black,fill=gray!20!white] [midway] {block};
	\end{tikzpicture}
	\end{adjustbox}
	\caption{An example of pushing mechanism
	for $i=3$ at levels 4 and 5 
	(i.e., $j=5$). Since the particles
	$\la_3=\bar\la_2$ and $\la_2=\bar\la_1$
	are blocked, the first particle 
	which can be pushed is $\la_1$.}
	\label{fig:pushing}
\end{figure}
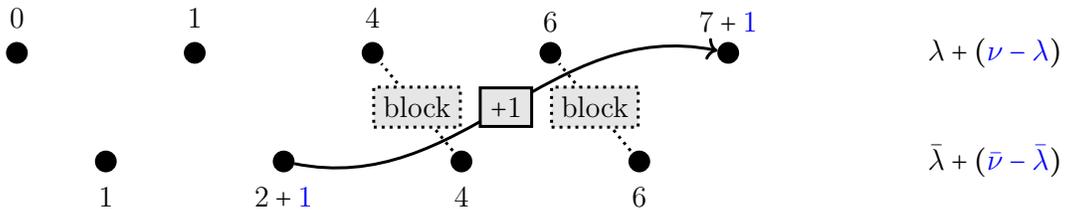
\begin{definition}The {\it Schensted's row insertion} is an algorithm that takes a semistandard tableau $\lab \in \mathbb{GT}^{(n)}$, and an integer $1 \leq x \leq n$, and constructs a new tableau $\lab \leftarrow x$ according to the following procedure:
 
$\bullet$ If $x$ is at least as large as all the entries in the first row of $\lab$, add $x$ in a new box to the end of the first row. In this case the algorithm terminates.

$\bullet$ Otherwise find the leftmost entry $y$ in the first row that is strictly larger than $x$ and replace it by $x$. 

$\bullet$ Repeat the same steps with $y$ and the second row, then with the replaced entry $z$ and the third row, ...,  and so on until the replaced entry can be put in the end of the next row, possibly by forming a new row of one entry. Then the algorithm terminates.
\end{definition}

In terms of arrays and long-range pulling we can describe this insertion in the following way:

$\bullet$ Levels $\la^{(1)}, \ldots, \la^{(x-1)}$ remain unchanged.

$\bullet$ Rightmost particle on the level $x$ moves by $1$ to the right, i.e $\la^{(x)} \to \la^{(x)} + \bar{\mathrm{e}}_{1}$.

$\bullet$ Then $\mathsf{pull}$ operations are consecutively performed for $j=x+1, \ldots, n$.
\begin{figure}[h]

\includegraphics[width = 0.9\textwidth]{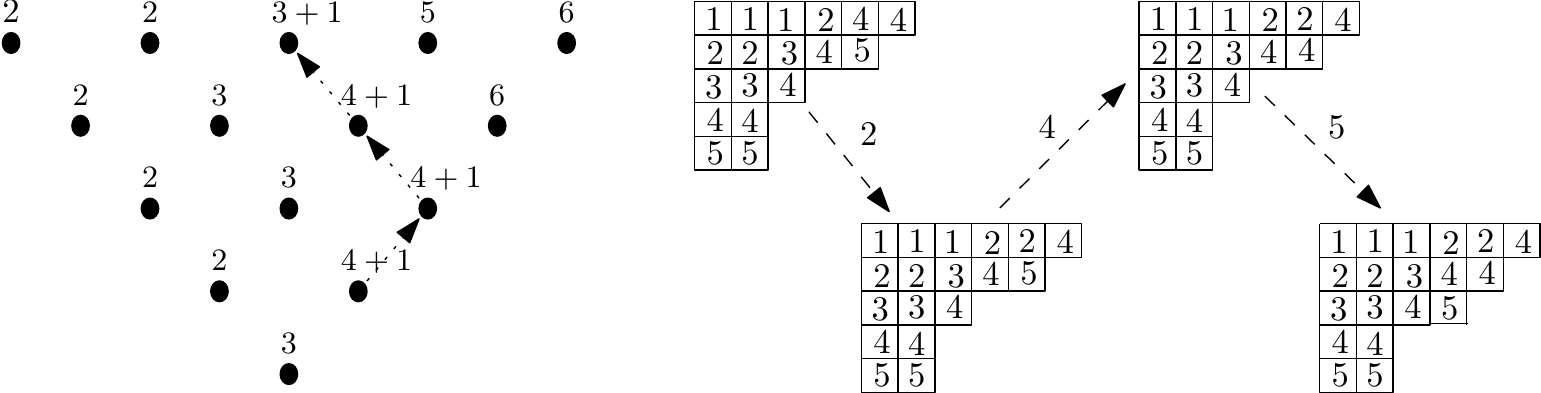}

\caption{An example of Schensted's row insertion in terms of semistandard tableaux and particle arrays for $n=5$.}

\end{figure} 

In words, this push-pull chain of movements starts on the right edge of the array and progresses upwards until it reaches the top level. This is the same as saying that shape of a tableau is augmented by one cell after row insertion of a single entry. One can also row-insert a word $X = x_{1}x_{2}\ldots x_{\ell}$ in a tableau $\lab$ by consecutively inserting its entries one by one:
\begin{align*}
\lab \leftarrow X := \lab \leftarrow x_{1} \leftarrow x_{2} \leftarrow \cdots \leftarrow x_{\ell}
\end{align*}

\begin{definition} The {\it Schensted's column insertion} is an algorithm that takes a semistandard tableau $\lab \in \mathbb{GT}^{(n)}$, and an integer $1 \leq x \leq n$, and constructs a new tableau $x \to \lab$ according to the following procedure:

$\bullet$ If $x$ is strictly larger than all the entries in the first column of $\lab$, add $x$ in a new box at the bottom  of the first column. In this case the algorithm terminates.

$\bullet$ Otherwise find the topmost entry $y$ in the first column that is at least large as $x$ and replace it by $x$. 

$\bullet$ Repeat the same steps with $y$ and the second column, then with the replaced entry $z$ and the third column, ...,  and so on until the replaced entry can be put at the bottom of the next column, possibly by forming a column of one entry. Then the algorithm terminates.

\end{definition}

In terms of arrays and long-range pushing we can describe this insertion in the following way:

$\bullet$ Levels $\la^{(1)}, \ldots, \la^{(x-1)}$ remain unchanged.

$\bullet$ Leftmost particle on the level $x$ moves by $1$ to the right, i.e $\la^{(x)} \to \la^{(x)} + \bar{\mathrm{e}}_{x}$.

$\bullet$ Then $\mathsf{push}$ operations are consecutively performed for $j=x+1, \ldots, n$.

\begin{figure}[h]

\includegraphics[width = 0.9\textwidth]{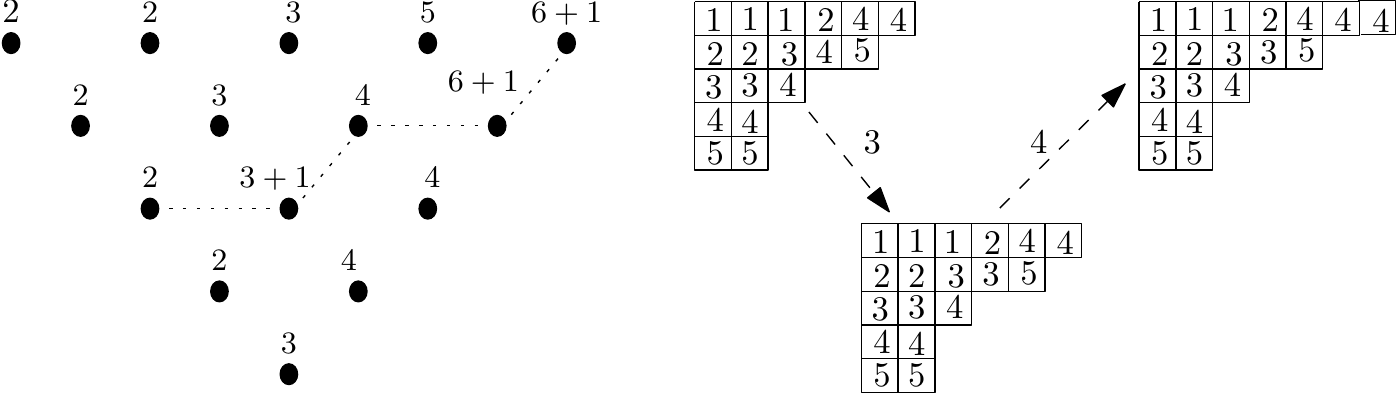}

\caption{An example of Schensted's column insertion in terms of semistandard tableaux and particle arrays for $n=5$. Only steps that change the tableau are shown.}

\end{figure} 

As in the case of the row insertion, on each of the levels from $x$-th to $N$-th precisely one particle moves to the right by $1$. The sequence of moves progresses upwards and to the right until it reaches the top level. One can also column-insert a word $X = x_{1}x_{2}\ldots x_{\ell}$ in a tableau $\lab$ by consecutively inserting its entries one by one in reverse order:
\begin{align*}
X \to \lab := x_{1} \to x_{2} \to  \cdots \to x_{\ell} \to \lab
\end{align*}

\subsection{Plactic Algebra}
To a semistandard tableau $\lab \in \mathbb{GT}^{(n)}$ one can associate an element $w(\lab) \in Pl_{n}$  represented by a word obtained by reading entry letters of $\lab$ first in the bottom row from left to right, then in the second  row  from the bottom from left to right, and so on. For instance, for a tableau on Fig.~\ref{fig:SSYT} the corresponding word will be $\mathsf{554433322255551111344}$.  The following proposition explains basic connections between the plactic monoid and Schensted insertions.

\begin{proposition} (see \cite{Lot02}).

\begin{enumerate}

\item
For every $a \in Pl_{n}$ there exists a unique tableau $\lab$, such that $a = w(\lab)$.

\item
$w(\lab \leftarrow x) = w(\lab) x$.

\item
$w(x \to \lab) = x w(\lab)$.

\end{enumerate}

\end{proposition}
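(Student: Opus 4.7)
The plan is to deduce all three parts from the classical theory of Knuth equivalence for Schensted row insertion, with part (2) serving as the combinatorial engine.

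Define $P(w) \in \mathbb{GT}^{(n)}$ to be the tableau obtained by row-inserting the letters of a word $w$ into the empty tableau from left to right. The cornerstone lemma is that $P$ is constant on Knuth equivalence classes: \emph{if $w \equiv w'$ in $Pl_n$, then $P(w) = P(w')$}. By transitivity of $\equiv$ it suffices to verify this for a single application of either Knuth relation $xzy \equiv zxy$ ($x \leq y < z$) or $yxz \equiv yzx$ ($x < y \leq z$) at some position of $w$. Writing $w = u\alpha v$ and $w' = u\beta v$, the fact that row insertion satisfies $P(w_{1} w_{2}) = P(w_{1}) \leftarrow w_{2}$ reduces this to showing $\lab \leftarrow \alpha = \lab \leftarrow \beta$ for any tableau $\lab = P(u)$, where $\alpha,\beta$ are the two sides of a single Knuth relation. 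This is a direct finite case analysis tracking what each of $x, y, z$ bumps from the first row of $\lab$, and it is the principal technical obstacle.

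I would next prove part (2), $w(\lab \leftarrow x) \equiv w(\lab) \cdot x$ in $Pl_n$, by induction on the number of rows of $\lab$. In the base case ($\lab$ has no rows, or the insertion terminates in row $1$ by appending without bumping) the two sides coincide as words. Otherwise $x$ bumps some $y$ from the first row of $\lab$, and an explicit sequence of Knuth moves rewrites $w(\lab) \cdot x$ as $w(\widetilde{\lab}) \cdot y$, where $\widetilde{\lab}$ denotes $\lab$ with $y$ replaced by $x$ in the first row; the induction hypothesis applied to the insertion of $y$ into the subtableau formed by rows $2, 3, \ldots$ of $\lab$ closes the induction. Part (3) follows by the symmetric argument for column insertion, replacing row bumps by column bumps throughout.

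Finally, part (1) follows. \emph{Existence}: given $a \in Pl_n$ with word representative $w = x_{1} \cdots x_{\ell}$, set $\lab := P(w)$; an induction on $\ell$ using (2) yields $w(\lab) \equiv w = a$ in $Pl_n$. \emph{Uniqueness}: combine the Knuth invariance of $P$ (cornerstone lemma) with the classical identity $P(w(\lab)) = \lab$ for every semistandard $\lab$, a standard induction on the number of rows of $\lab$; then $w(\lab_{1}) \equiv w(\lab_{2})$ forces $\lab_{1} = P(w(\lab_{1})) = P(w(\lab_{2})) = \lab_{2}$.
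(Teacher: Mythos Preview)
The paper does not prove this proposition at all; it is quoted with a bare citation to \cite{Lot02}. So there is no ``paper's proof'' to compare against, and your outline is the classical route found in Lothaire or Fulton.

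Your overall architecture is sound, but the inductive step for part (2) is misstated in a way that makes the induction fail as written. You claim that Knuth moves rewrite $w(\lab)\cdot x$ as $w(\widetilde{\lab})\cdot y$, where $\widetilde{\lab}$ is $\lab$ with $y$ replaced by $x$ in the first row. Writing $\lab$ as its first row $R_1$ sitting under the subtableau $R_{\ge 2}$ formed by rows $2,3,\ldots$, this would say $R_1\cdot x \equiv \widetilde{R_1}\cdot y$, which is false: take $R_1 = 13$, $x=2$, so $y=3$, $\widetilde{R_1}=12$; then $132$ and $123$ are \emph{not} Knuth equivalent (they have different $P$-tableaux). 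The correct one-row lemma is $R_1\cdot x \equiv y\cdot \widetilde{R_1}$, i.e.\ the bumped letter $y$ goes to the \emph{left} of the new first row, not to the right of the whole reading word. With that in hand the induction runs cleanly:
\[
w(\lab)\cdot x = w(R_{\ge 2})\cdot R_1\cdot x \;\equiv\; w(R_{\ge 2})\cdot y\cdot \widetilde{R_1} \;\equiv\; w(R_{\ge 2}\leftarrow y)\cdot \widetilde{R_1} \;=\; w(\lab\leftarrow x),
\]
the middle step being the induction hypothesis applied to $R_{\ge 2}$. Your formulation places $y$ on the wrong side of $\widetilde{R_1}$, so the subsequent appeal to the induction hypothesis does not connect. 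Once this is fixed, the rest of your plan (the cornerstone lemma, the symmetric argument for (3), and the existence/uniqueness deduction for (1) via $P(w(\lab))=\lab$) goes through as you describe.
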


Hence consecutively inserting $x_{1}, x_{2}, \ldots, x_{r}$ via the Schensted's row insertion into a tableau $\lab$ leads to the the same result as multiplication of $w(\lab)$ by $X = x_{1}x_{2}\cdots x_{r}$ on the right, while multiplication of $w(\lab)$ by $X$ on the left amounts to the same result as consecutively inserting $x_{r}, \ldots, x_{2}, x_{1}$ in $\lab$ via the Schensted's column insertion. 

\medskip

The plactic algebra is noncommutative for $n \ge 2$, but it contains nice families of commuting elements. More precisely, for $\la \in \GT_n$ and variables $a_{1}, a_{2}, \ldots, a_{n}$ define the {\it plactic Schur polynomial}
\begin{align}
S_{\la}^{Pl}(a_{1} \mathsf{1}, a_{2} \mathsf{2}, ,\ldots, a_{n} \mathsf{n}):= \sum_{\lab \in \mathbb{GT}^{(n)}(\la)} w(\lab) \cdot a_{1}^{|\la^{(1)}|}a_{2}^{|\la^{(2)}|-|\la^{(1)}|} \cdots a_{n}^{|\la^{(n)}|-|\la^{(n-1)}|}.
\end{align}
 
\begin{proposition} 
(see \cite{Lot02}). $S_{\la}(a_{1} \mathsf{1} ,\ldots, a_{n} \mathsf{n})$ and  $S_{\mu}(a_{1}  \mathsf{1},\ldots, a_{n} \mathsf{n})$ commute for arbitrary $\la$ and $\mu$, and their product can be expressed as 
\begin{align}
\label{plactic_expansion}
S_{\la}^{Pl}(a_{1} \mathsf{1},\ldots, a_{n} \mathsf{n}) S_{\mu}^{Pl}(a_{1} \mathsf{1},  \ldots, a_{n} \mathsf{n})  = \sum_{\nu} c^{\nu}_{\la, \mu} S_{\nu}^{Pl}(a_{1} \mathsf{1},\ldots, a_{n} \mathsf{n}),
\end{align}
where  $c^{\nu}_{\la, \mu}$ is the Littlewood-Richardson coefficient, i.e the coefficient of $S_{\nu}$ in the expansion of $S_{\la}S_{\mu}$ in the basis of Schur functions. 
\label{prop:comsub}
\end{proposition}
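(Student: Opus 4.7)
The plan is to expand the plactic product directly from its definition, and then identify the resulting combinatorics with the Littlewood--Richardson expansion via Schensted row insertion.

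First, unpacking definitions,
\[
S_{\la}^{Pl}(a_1\mathsf{1},\ldots,a_n\mathsf{n}) \, S_{\mu}^{Pl}(a_1\mathsf{1},\ldots,a_n\mathsf{n}) = \sum_{T_\la} \sum_{T_\mu} a^{T_\la} a^{T_\mu} \, w(T_\la) w(T_\mu),
\]
where $T_\la$ ranges over $\mathbb{GT}^{(n)}(\la)$, $T_\mu$ over $\mathbb{GT}^{(n)}(\mu)$, and $a^T = \prod_i a_i^{T(i)}$ denotes the content monomial. Property (2) of the previous proposition, iterated letter by letter through $w(T_\mu)$, gives $w(T_\la) w(T_\mu) = w(T_\la \leftarrow w(T_\mu))$. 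Row insertion preserves the multiset of entries, so the resulting tableau $T := T_\la \leftarrow w(T_\mu)$ satisfies $a^T = a^{T_\la} a^{T_\mu}$. Regrouping the sum by the output $T$,
\[
S_{\la}^{Pl} \, S_{\mu}^{Pl} = \sum_{\nu} \sum_{T \in \mathbb{GT}^{(n)}(\nu)} N_{\la,\mu}(T)\, a^T\, w(T), \qquad N_{\la,\mu}(T) := \#\bigl\{(T_\la, T_\mu) : T_\la \leftarrow w(T_\mu) = T\bigr\}.
\]

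The crux is the classical identity $N_{\la,\mu}(T) = c^\nu_{\la,\mu}$ for every semistandard tableau $T$ of shape $\nu$. This is the insertion form of the Littlewood--Richardson rule: given $T$ and the target subshape $\la$, reverse row insertion peels off the cells of $\nu/\la$ from $T$ in the canonical order, producing a bijection between factorizations $T = T_\la \leftarrow w(T_\mu)$ and LR skew tableaux of shape $\nu/\la$ and content $\mu$. The number of such fillings depends only on $\nu$ and not on the entries of $T$, by the jeu-de-taquin invariance of LR skew tableaux; I would cite \cite{Lot02, F97} for this step. Plugging back in gives exactly
\[
S_{\la}^{Pl} \, S_{\mu}^{Pl} = \sum_\nu c^\nu_{\la,\mu}\, S_\nu^{Pl}.
\]

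Commutativity then falls out for free: re-running the argument on $S_\mu^{Pl} \, S_\la^{Pl}$ yields the same formula with $c^\nu_{\mu,\la}$ in place of $c^\nu_{\la,\mu}$, and the symmetry $c^\nu_{\la,\mu} = c^\nu_{\mu,\la}$ inherited from the commutativity of ordinary Schur polynomials in $\Lambda_n$ closes the argument. The only nontrivial ingredient is the shape-independence of $N_{\la,\mu}(T)$, which is the actual substance of the LR rule and the step I expect to absorb the most space in a full write-up; everything else is a formal manipulation using properties (2) and (3) of the reading-word map.
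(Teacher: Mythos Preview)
The paper does not actually prove this proposition; it is stated with the attribution ``(see \cite{Lot02})'' and no argument is given. Your outline is a correct sketch of the standard proof one finds in the cited references (\cite{Lot02}, or Chapter~5 of \cite{F97}): expand the product over pairs of tableaux, use $w(T_\la)w(T_\mu)=w(T_\la\leftarrow w(T_\mu))$, regroup by the output tableau, and invoke the Littlewood--Richardson rule in its insertion form to identify the multiplicity as $c^\nu_{\la,\mu}$.

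One small imprecision worth tightening in a full write-up: the sentence about reverse insertion ``peeling off the cells of $\nu/\la$ in the canonical order'' glosses over the role of the recording tableau. The bijection is really between pairs $(T_\la,T_\mu)$ with $T_\la\leftarrow w(T_\mu)=T$ and skew tableaux $U$ of shape $\nu/\la$ whose rectification has shape $\mu$; the independence of this count from $T$ is exactly the nontrivial content (proved e.g.\ via jeu de taquin in \cite{F97}). You flag this correctly as the step that absorbs the real work, so there is no gap, only phrasing to sharpen.
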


\begin{remark} A homomorphism from the plactic algebra to $\Lambda_{n}$ defined by sending every generator 
$\mathsf{k}$ to $1$ sends \eqref{plactic_expansion} to
\begin{align*}
S_{\la}(a_{1},\ldots, a_{N}) S_{\mu}(a_{1}, \ldots, a_{N}) = \sum_{\nu} c^{\nu}_{\la, \mu} S_{\nu}(a_{1},\ldots, a_{N}),
\end{align*}
which is the defining relation for the Littlewood-Richardson coefficients (for $\ell(\lambda), \ell(\mu), \ell(\nu) \leq n$). 
\end{remark}
We will now simplify notation by identifying $\lab$ with $w(\lab)$. 

\subsection{Plactic algebra action continued}
\label{PlacticAlgebraActionsContinued}
We are now ready to continue subsection \ref{PlacticAlgebraActions} and prove proposition \ref{PlacticPositivity}. 

\begin{proof}[Proof of proposition \ref{PlacticPositivity}]
For a tableau $\lab$ denote by $\mathcal{S}(\lab)$ the set of entries of the first column of $\lab$. Extend this function linearly to the whole plactic algebra. It follows from the description of Schensted's insertion that $\mathcal{S}(\tilde{\lab } \lab)$ depends on $\tilde{\lab}$ and $S(\lab)$, but not on the whole $\lab$. Identify $S \subseteq \{1, 2, \ldots, n\}$ with a one-column tableau with $S$ as the set of entries. Then it follows from the description of Schensted's insertion that $H_{r} \cdot S = \mathcal{S}(S^{Pl}_{(r)} \cdot S)$. Here $S^{Pl}_{(r)}$ is the corresponding one-row plactic Schur polynomial. Since $S^{Pl}_{(r)}$'s commute with each other, we can apply the Jacobi-Trudi formula  to get
\begin{align}
\label{PlacticTableauSum}
\det 
\left[ H_{\lambda_{i} - i + j} \right]_{i, j=1}^{\ell} \cdot S = S^{Pl}_{\lambda} \cdot S = \sum_{\lab}  a_{1}^{|\la^{(1)}|}a_{2}^{|\la^{(2)}|-|\la^{(1)}|} \cdots a_{n}^{|\la^{(n)}|-|\la^{(n-1)}|} \mathcal{S}(\lab S)
\end{align}
Note that here $\cdot$ denotes plactic algebra action as defined in subsection \ref{PlacticAlgebraActions}, while multiplication in the plactic algebra itself is written without a dot. Equality \eqref{PlacticTableauSum} implies positivity in proposition \ref{PlacticPositivity}. 
\end{proof}
Note that for the special case $S = \{1, 2, \ldots, n\}$ 
equality \eqref{PlacticTableauSum} becomes equality \eqref{BasicTableauSum}. 

\section{Searching for a $t$-deformation of plactic action}
\label{tDeformations}
\subsection{Towards proving theorem \ref{HLpositivity}}
Proof of proposition \ref{PlacticPositivity} together with equality \eqref{HLTableauSum} suggest the following plan to prove theorem \ref{HLpositivity}. For a semistandard tableau $\lab \in \mathbb{GT}^{(n)}$ find a linear operator $T_{\lab}:V^{\otimes n} \to V^{\otimes n}$ such that
\begin{enumerate}
\item
$T_{\lab}^{t=0}(S) = \mathcal{S}(\lab S)$

\item 
Matrix elements of $T_{\lab}$ with respect to basis $\{\mathsf{1}, \mathsf{2}\}^{\otimes n}$ are positive for $0 \leq t <1$. 

\medskip

\item
\begin{align}
\label{HLPlacticSum}
\Theta(P_{\lambda}) = \sum_{\lab \text{ of shape } \lambda} \psi_{\lab}(t)a^{\lab} T_{\lab}
\end{align}
\end{enumerate}
If we were to find such operators, the positivity in theorem \ref{HLpositivity} would follow, just as proposition \ref{HLBasicPositivity} follows from equality \eqref{HLTableauSum}. For a one-row $\lab$ with entries $i_{1} \leq i_{2} \leq \cdots \leq i_{r}$ we must take the coefficient of $a_{i_{1}}a_{i_{2}}\cdots a_{i_{r}}$ in $\frac{1}{1-t}T_{r}$ in order to satisfy condition \eqref{HLPlacticSum}. Similarly, for a one-column $\lab$ with entries $i_{1} < i_{2} < \cdots < i_{r}$ we must take the coefficient of $a_{i_{1}}a_{i_{2}}\cdots a_{i_{r}}$ in $\Theta(e_{r})$. However, for $\lab$ with more rows and columns the choice and existence of $T_{\lab}$ are not clear. The idea is think in line with some of the previous works on deformations of RSK algorithms, i.e. \cite{OP13}, \cite{MP17}, \cite{BM18}. A feature of this algorithms is that at intermediate stages we need to store additional information. This and experimentation suggests that $T_{\lab}$ should act by $t$-inserting columns of $\lab$ in the reverse order and preserving information about previous insertions. This idea together with guess and check leads to the following

\subsection{Extended vertex models enter the picture}
To prove theorem \ref{HLpositivity} we will derive representation of $\Theta(P_{\lambda})$ from which the desired positivity is evident. This is accomplished by introducing the following  ("extended") $3$-colored vertex model.  Let $W$ be a three-dimensional real vector space spanned by elements $\mathsf{0}, \mathsf{1}, \mathsf{2}$. Let $i: V \hookrightarrow W$ be the natural inclusion and $\pi: W \to V$ be a projection defined via $\pi(\mathsf{0}) = \mathsf{1}$. These maps, respectively, induce inclusion $i^{\otimes n} :V^{\otimes n} \hookrightarrow W^{\otimes n}$ and projection $\pi^{\otimes n} :W^{\otimes n} \to V^{\otimes n}$.  $W^{\otimes n} = \bigoplus_{k=0}^{n}W_{n, k}$, where each subspace $W_{n, k}$ is defined as a span of vectors  $e_{1} \otimes e_{2} \otimes \cdots \otimes e_{n}$ with each $e_{i} \in \{ \mathsf{0}, \mathsf{1}, \mathsf{2} \}$ and exactly $k$ $\mathsf{0}$'s among the $e_{i}$'s. Let $U_{2}$ be an (infinite-dimensional) real vector space of finite formal linear combinations of pairs $\left \{ \{ x, y \} \in \mathbb{Z}_{\geq 0}^{2} \right \}$. Given two parameters $a, t$ we define an operator $R_{ext} = R_{ext}(a, t): W \otimes U_{2} \to W \otimes U_{2}$ by 
\begin{flalign}
\label{ExtendedVertexDef}
& R_{ext}(\mathsf{0} \otimes \{ x, y \}) =    a \cdot \mathsf{0} \otimes \{ x, y \} + \left(t^{y} - t^{x+y} \right) \cdot \mathsf{1} \otimes \{ x-1, y \} + \left(1 - t^{y} \right) \cdot \mathsf{2} \otimes \{ x, y-1 \}, \nonumber \\ 
& R_{ext}(\mathsf{1} \otimes \{ x, y \}) = a \cdot \mathsf{0} \otimes \{ x+1, y \} + t^{y}  \cdot \mathsf{1} \otimes \{ x, y \}  + \left(1 - t^{y} \right) \cdot \mathsf{2} \otimes \{x+1, y-1 \}, \\
& R_{ext}(\mathsf{2} \otimes \{ x, y \}) = a \cdot \mathsf{0} \otimes \{ x, y+1 \} +  \mathsf{2} \otimes \{ x, y \} \nonumber. 
\end{flalign}
$R_{ext}$ gives rise to a vertex model with weights as specified on Fig. \ref{ExtendedVertex}. 
\begin{figure}[h]
\includegraphics[width = 
0.9\textwidth]{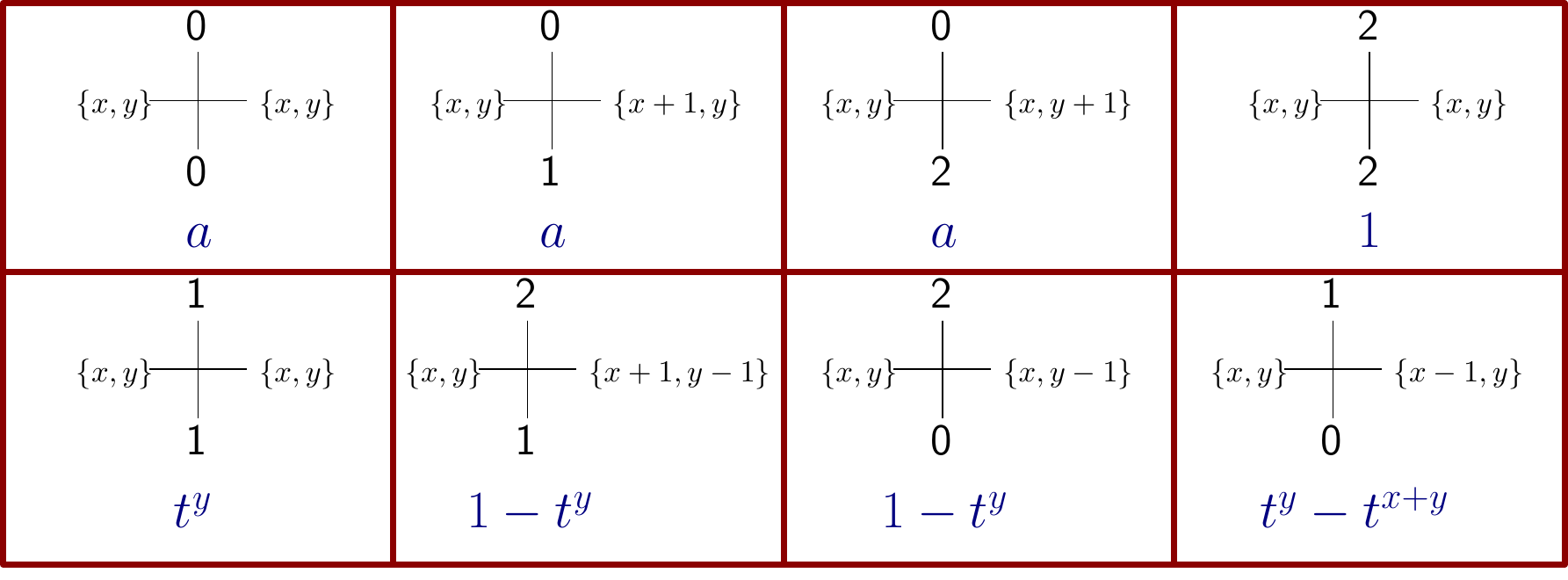}
\caption{Weights of the vertex model defined via $R_{ext}(a, t)$. All configurations not appearing on this picture are assumed to have weight $0$.}
\label{ExtendedVertex}
\end{figure} 

Define an operator $H: W^{\otimes n} \to W^{\otimes n}$ as the inhomogeneous transfer operator of $R_{ext}$ as specified on Fig. \ref{ExtendedTransferMatrix}. Note that there is fixed input $\{0, 0\}$ on the left, while boundary condition on the right is free. $H$ depends on parameters $t, a_{1}, a_{2}, \ldots, a_{n}$. If a pair $\{x, y\} \in \mathbb{Z}_{\geq 0}^{2}$ appears on the right boundary of the non-zero weight configuration with bottom row $e_{1} \otimes e_{2} \otimes \cdots \otimes e_{n}$ and top row $e_{1}' \otimes e_{2}' \otimes \cdots \otimes e_{n}'$, then it is clear from the model that 
\begin{align*}
x = \text{number of $\mathsf{1}$'s among the $e_{i}$'s} - \text{number of $\mathsf{1}$'s among the $e_{i}'$'s}; \\
y = \text{number of $\mathsf{2}$'s among the $e_{i}$'s} - \text{number of $\mathsf{2}$'s among the $e_{i}'$'s.}
\end{align*}
As a corollary, 
\begin{align*}
\text{number of $\mathsf{0}$'s among the $e_{i}'$'s} - \text{number of $\mathsf{0}$'s among the $e_{i}$'s} = x+y \geq 0.
\end{align*}
\begin{figure}[h]
\includegraphics[width = 
0.9\textwidth]{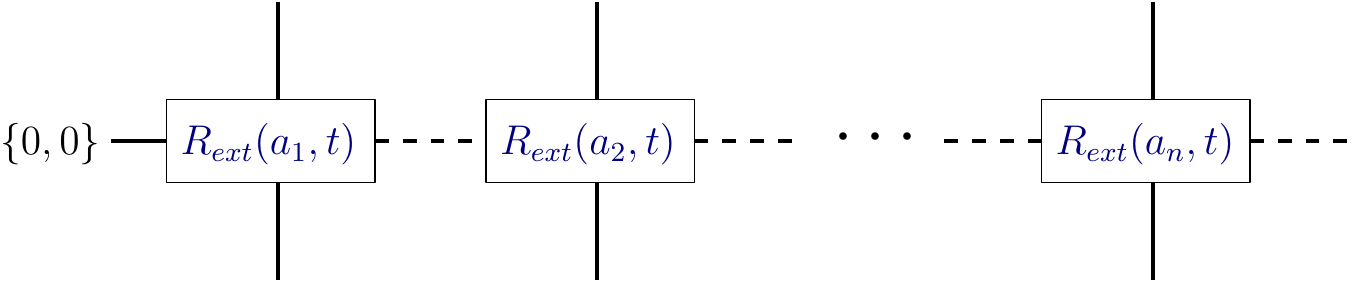}
\caption{$H$ is a transfer operator for the inhomogeneous vertex model defined via $R_{ext}$. Boundary condition is fixed to be $\{0, 0\}$ on the left and is free on the right.} 
\label{ExtendedTransferMatrix}
\end{figure}

For $0 \leq k_{1}, k_{2} \leq n$ define an operator $H_{k_{1}, k_{2}}:W_{n, k_{1}} \to W_{n, k_{2}}$ via restriction of $H$. Then $H_{k_{1}, k_{2}} = 0$ unless $k_{1} \leq k_{2}$. Then theorem \ref{HLpositivity} would follow from the following
\begin{theorem}
\label{ExtendedVertexTheorem}
Let $\lambda$ be a partition with $\lambda_{1} = m$. Then 
\begin{align}
\label{ExtendedVertexRepresentation}
\Theta(P_{\lambda}) = \pi^{\otimes n} \circ H_{\lambda_{2}', \lambda_{1}'} \circ H_{\lambda_{3}', \lambda_{2}'} \circ \cdots \circ H_{\lambda_{m}', \lambda_{m-1}'} \circ H_{0, \lambda_{m}'} \circ i^{\otimes n}
\end{align}
\end{theorem}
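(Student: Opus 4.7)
The plan is to establish \eqref{ExtendedVertexRepresentation} by expanding the right-hand side as a weighted sum over configurations of the stacked extended vertex model and matching the result term-by-term with the Hall-Littlewood tableau formula \eqref{HLTableauSum}. Concretely, the composition $\pi^{\otimes n} \circ H_{\la_2',\la_1'} \circ \dots \circ H_{0,\la_m'} \circ i^{\otimes n}$ unfolds into a sum over configurations on an $n \times m$ lattice: one row per column of $\la$, with the bottom boundary in $W_{n,0}$ (arriving from $V^{\otimes n}$) and the top boundary in $W_{n,\la_1'}$. Each configuration should encode (i) a semistandard tableau $\lab$ of shape $\la$, obtained by tracking the columns in which $\mathsf{0}$-particles are produced at successive rows of the lattice, and (ii) a residual configuration of the ordinary stochastic six-vertex model governing the $\mathsf{1}$/$\mathsf{2}$ content, describing the induced map on $V^{\otimes n}$.

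The central step is to verify the weight factorization: the total weight of each configuration equals $\psi_{\lab}(t)\, a^{\lab}$, matching the tableau weight of \eqref{HLTableauSum}, times the weight of the underlying six-vertex configuration. The $a$-factors come out immediately, since each vertex creating a $\mathsf{0}$ at column $i$ contributes $a_i$, and these aggregate to $a^{\lab}$ precisely when the entry in each box of $\lab$ is taken to be the column of the corresponding $\mathsf{0}$-creation. The $t$-factors $t^y$ and $1-t^y$ appearing in $R_{ext}$, accumulated over all vertices of a configuration, must then reorganize into the box-by-box product $\prod_{s\in\lab}(1-e(s))$ defining $\psi_{\lab}(t)$. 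Once this factorization is in hand, summing over configurations and invoking \eqref{HLTableauSum} together with the definition of $\Theta$ gives the claim.

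The main obstacle is the $t$-weight reorganization. The carrier state $\{x,y\}$ along each row couples the $\mathsf{0}$-creation pattern (which encodes $\lab$) with the $\mathsf{1}$/$\mathsf{2}$-transition pattern (which encodes the residual six-vertex configuration), so the factorization has to cleanly separate these two data. I anticipate needing an intermediate lemma, proved by induction on the number of columns $m$, that isolates the $\psi$-weight of one column of $\lab$ at a time: adding one operator $H_{\la_{j+1}',\la_j'}$ from the left corresponds to adjoining a new column to $\la$, and its contribution to the weight must factor out the $\psi$-factor of the corresponding column of $\lab$. At the technical level, I must identify the $y$-coordinate of the carrier at the vertex where the $\mathsf{0}$ corresponding to a box $s$ is created with the exponent appearing in $e(s) = t^{|\{\text{entries }<p\text{ in col }j-1\}|-i+1}$; this should follow from a careful bookkeeping argument matching $\mathsf{2}$-conversions in the carrier with entries smaller than $p$ in the previous column of $\lab$. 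Once this local correspondence is pinned down, assembling the factors and applying the tableau formula for $P_\la$ will complete the proof.
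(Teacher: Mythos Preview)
Your approach is fundamentally different from the paper's and, as written, has a genuine gap in the final step.

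The paper does \emph{not} attempt a direct combinatorial factorization of the stacked $H$-operators. Instead it argues algebraically: writing $\Pi_\lambda$ for the right-hand side of \eqref{ExtendedVertexRepresentation}, it proves a Pieri-type commutation relation
\[
\widetilde T(\alpha)\,\Pi_\lambda \;=\; \sum_{\lambda\prech\mu}\alpha^{|\mu|-|\lambda|}\,\phi_{\mu/\lambda}\,\Pi_\mu,
\]
established via a Yang--Baxter equation intertwining $R_{ext}$ with an auxiliary model $R_{aux}$ (Lemma~\ref{KeyHrelationsLemma}). Since $\Theta(P_\lambda)$ obeys the same recursion by construction of $\Theta$, an induction on $(\lambda_1',\lambda_{-1})$ forces $\Pi_\lambda=\Theta(P_\lambda)$. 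Your route --- expand $\Pi_\lambda$ as $\sum_{\lab}\psi_{\lab}(t)\,a^{\lab}\,T_{\lab}$ and then invoke \eqref{HLTableauSum} --- is exactly the wish-list spelled out in Section~\ref{tDeformations}, but the paper does not carry it out in that form.

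The gap is your last sentence: ``summing over configurations and invoking \eqref{HLTableauSum} together with the definition of $\Theta$ gives the claim.'' It does not. The identity \eqref{HLTableauSum} is a \emph{scalar} identity $P_\lambda=\sum_{\lab}\psi_{\lab}(t)\,a^{\lab}$ in $\Lambda_n$; applying $\Theta$ to it is not even meaningful term-by-term, since a single monomial $a^{\lab}$ is not a symmetric polynomial and $\Theta$ is only defined on $\Lambda_n$. What you would actually need is the \emph{operator} identity $\Theta(P_\lambda)=\sum_{\lab}\psi_{\lab}(t)\,a^{\lab}\,T_{\lab}$ for your specific operators $T_{\lab}$, and nothing in the definition $\Theta(g_k)=T_k$ hands you this; it is essentially equivalent to the theorem itself. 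The paper flags precisely this difficulty (``for $\lab$ with more rows and columns the choice and existence of $T_{\lab}$ are not clear'') and sidesteps it via the Pieri/Yang--Baxter argument. A secondary concern is the factorization itself: the carrier-dependent weights $t^y$, $1-t^y$, $t^y(1-t^x)$ in $R_{ext}$ mix the $\mathsf 0$-creation data with the $\mathsf 1/\mathsf 2$ data at every vertex, and the ``residual six-vertex configuration'' you posit does not obviously carry the six-vertex weights of Fig.~\ref{Stochastic6Vmod}. Even if a clean factorization exists, you would still owe an independent argument linking $\sum_{\lab}\psi_{\lab}(t)\,a^{\lab}\,T_{\lab}$ back to $\Theta(P_\lambda)$.
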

\begin{remark}
Note that although $\Theta(P_{\lambda})$ is itself an operator $V^{\otimes n} \to V^{\otimes n}$, the right hand side of representation \eqref{ExtendedVertexRepresentation} utilizes a bigger space $W^{\otimes n}$ in its intermediate steps.
\end{remark}

\section{Proof of the main result}
\label{Proof}
\begin{proof}
Hall-Littlewood polynomials (as well as more general Macdonald polynomials) satisfy the Pieri formulas (see \cite{Mac99}, pp.340-341):
\begin{align}
\label{PieriFormulas}
P_{\lambda}g_{r} = \sum_{\lambda \prech \mu, \ |\mu|-|\lambda|=r}\phi_{\mu/\lambda}(0, t)P_{\mu}, \qquad
P_{\lambda}e_{r} = \sum_{\lambda \precv \mu, \ |\mu|-|\lambda|=r}\psi'_{\mu/\lambda}(0, t)P_{\mu}. 
\end{align}
We will adopt the conventions that $\lambda_{m}' = 0$ for $m > \lambda_{1}$, $\mu_{m}'=0$ for $m > \mu_{1}$, and $\lambda_{0}' = \mu_{0}'$. The multiplicities $\phi_{\mu/\lambda} = \phi_{\mu/\lambda}(0, t)$ and $\psi'_{\mu/\lambda} = \psi'_{\mu/\lambda}(0, t)$ in \eqref{PieriFormulas} can be expressed as
\begin{flalign}
\label{phirep}
& \phi_{\mu/\lambda} = \prod_{i=1}^{\mu_{1}} \mathbf{1}_{\mu_{i}'=\lambda_{i}'+1, \ \mu'_{i+1}=\lambda_{i+1}'}\cdot \left(1 -t^{\mu_{i}'-\lambda_{i+1}'}\right)  = \nonumber \\ 
& =
\prod_{i=1}^{\mu_{1}}
\mathbf{1}_{\mu_{i}'=\lambda_{i}'+1} \cdot \left( \mathbf{1}_{\mu_{i-1}'=\lambda_{i-1}'} \cdot \left(1 -t^{\mu_{i}'-\lambda_{i+1}'}\right) + \mathbf{1}_{\mu_{i-1}'=\lambda_{i-1}'+1} \cdot \left(\frac{1 -t^{\mu_{i}'-\lambda_{i+1}'}}{1-t^{\mu_{i-1}'-\lambda_{i}'}}\right) \right) = \\ 
& = \left(1 - t \cdot \mathbf{1}_{\mu_{1} > \lambda_{1}} \right) \cdot \prod_{i=1}^{\lambda_{1}}
\mathbf{1}_{\mu_{i}'=\lambda_{i}'+1} \cdot \left( \mathbf{1}_{\mu_{i-1}'=\lambda_{i-1}'} \cdot \left(1 -t^{\mu_{i}'-\lambda_{i+1}'}\right) + \mathbf{1}_{\mu_{i-1}'=\lambda_{i-1}'+1} \cdot \left(\frac{1 -t^{\mu_{i}'-\lambda_{i+1}'}}{1-t^{\mu_{i-1}'-\lambda_{i}'}}\right) \right) \nonumber
\end{flalign}
\begin{flalign}
\label{psiprimerep}
\psi'_{\mu/\lambda} = \prod_{i=1}^{\lambda_{1}} \frac{(t;t)_{\mu'_{i}-\mu'_{i+1}}}{(t;t)_{\mu'_{i}-\lambda'_{i}}(t;t)_{\lambda'_{i}-\mu'_{i+1}}} = \prod_{i=1}^{\mu_{1}} \left(\frac{(t;t)_{\mu'_{i}-\lambda'_{i+1}}}{(t;t)_{\mu'_{i}-\lambda'_{i}}(t;t)_{\lambda'_{i}-\lambda'_{i+1}}} \cdot \frac{(t;t)_{\mu'_{i-1}-\mu'_{i}}(t;t)_{\lambda'_{i-1}-\lambda'_{i}}}{(t;t)_{\mu'_{i-1}-\lambda'_{i}}(t;t)_{\lambda'_{i-1}-\mu'_{i}}}  \right)
\end{flalign}
Denote by $\Pi_{\lambda}$ the right hand side of \eqref{ExtendedVertexRepresentation}. To prove theorem \ref{ExtendedVertexTheorem} we need to show that $\Pi(P_{\lambda}) = \Pi_{\lambda}$ for any partition $\lambda$. 
It is enough to prove that for any partition $\lambda$ we have 
\begin{align}
\label{HorizontalPieriRelation}
T(\alpha)\Pi_{\lambda} = \left( \prod_{i=1}^{n} \frac{1 - \alpha a_{i}}{1 - t \alpha a_{i}} \right)\sum_{\lambda \prech \mu} \alpha^{|\mu|-|\lambda|}\phi_{\mu/\lambda} \Pi_{\mu}.
\end{align}
Indeed, suppose we know that relation \ref{HorizontalPieriRelation} holds. Multiply both sides of this relation by $\displaystyle \prod_{i=1}^{n} \frac{1-t\alpha a_{i}}{1 - \alpha a_{i}}$,  then take coefficient of $\alpha^r$ to get
\begin{align}
\label{HP}
T_{r}\Pi_{\lambda} = \sum_{\lambda \prech \mu, \ |\mu|-|\lambda|=r} \phi_{\mu/\lambda} \Pi_{\mu} \qquad \text{for any $r \in \mathbb{Z}_{\geq 0}$}.
\end{align}
On the other hand, it follows from the definition of $\Theta(P_{\lambda})$ that 
\begin{align}
T_{r}\Theta(P_{\lambda}) = \sum_{\lambda \prech \mu, \ |\mu|-|\lambda|=r} \phi_{\mu/\lambda} \Theta(P_{\mu}) \qquad \text{for any $r \in \mathbb{Z}_{\geq 0}$}.
\end{align}
We can now show that $\Theta(P_{\lambda}) = \Pi_{\lambda}$ by induction on $(\lambda_{1}', \lambda_{-1})$. Base for $(0, 0)$ is clear: $H_{k, k}(v) = v$ for any $v \in \langle \mathsf{1}, \mathsf{2} \rangle ^{\otimes n}$, hence $\Pi_{\emptyset} = Id = \Theta(P_{\emptyset})$. Suppose the equality $\Theta(P_{\lambda}) = \Pi_{\lambda}$ has been established for all $\lambda$ with either $\lambda_{1}' < \nu_{1}'$ or  both $\lambda_{1}' = \nu_{1}'$ and $\lambda_{-1} < \nu_{-1}$. We would like to also establish it for $\nu$. Let $\chi$ be the partition obtained from $\nu$ by deleting its last nonzero row. Then by \eqref{HP} and the inductive assumption we get 
\begin{multline*}
\Theta(P_{\nu}) = \frac{1}{\phi_{\nu/\chi}}\left(T_{\nu_{-1}}\Theta(P_{\chi}) - \sum_{\chi \prech \lambda, \ |\lambda|-|\chi|=\nu_{-1}, \ \lambda \neq \nu} \phi_{\lambda/\chi} \Theta(P_{\lambda}) \right) = \\ = \frac{1}{\phi_{\nu/\chi}}\left( T_{\nu_{-1}}\Pi_{\chi} - \sum_{\chi \prech \lambda, \ |\lambda|-|\chi|=\nu_{-1}, \ \lambda \neq \nu} \phi_{\lambda/\chi} \Pi_{\lambda} \right) = \Pi_{\nu}.
\end{multline*}
So it remains to establish \eqref{HorizontalPieriRelation}. Consider a stochastic vertex model $R_{3}(a, t)$ with weights as depicted on Fig. \ref{StochasticTripleVertex}.  
\begin{figure}[h]
\includegraphics[width = 
0.8\textwidth]{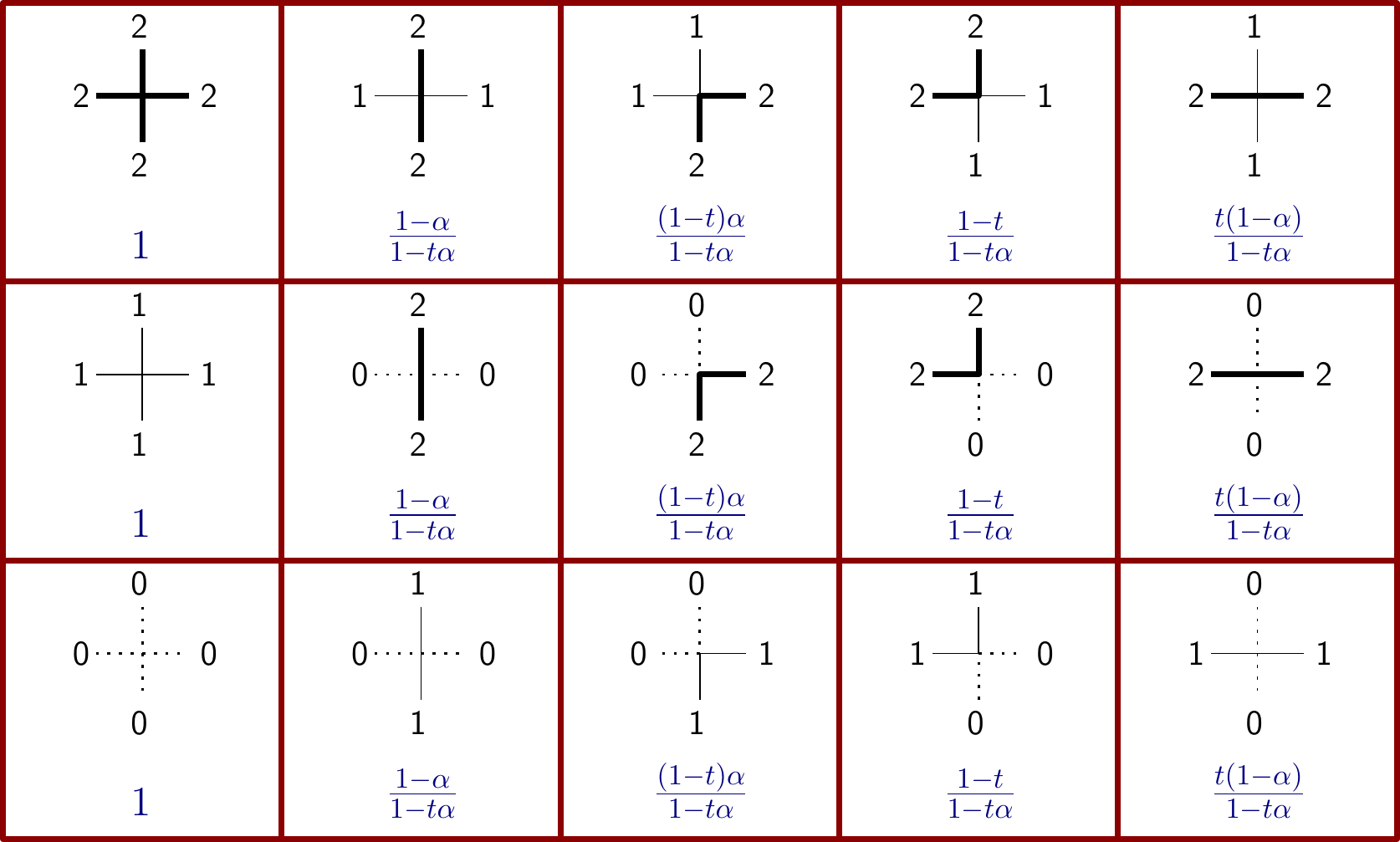}
\caption{Stochastic vertex model with weights given by $R_{3}(a, t)$. Fat lines correspond to $\mathsf{2}$'s, normal lines correspond to $\mathsf{1}$'s, dotted lines correspond to $\mathsf{0}$'s.}
\label{StochasticTripleVertex}
\end{figure}

Denote by $\mathcal{A}_{k}$  the inhomogeneous transfer operator $W_{n, k} \to W_{n, k}$ of $R_3$ with both left and right boundary conditions fixed to be $\mathsf{0}$ (as specified on top of Fig. \ref{TransferMatrixAB}). Denote by $\mathcal{B}_{k}$  the inhomogeneous transfer operator $W_{n, k} \to W_{n, k+1}$ of $R_3$ with the left boundary condition fixed to be $\mathsf{0}$ and the right boundary condition being $\mathsf{1}$ or $\mathsf{2}$ (as specified on bottom Fig. \ref{TransferMatrixAB}). 

\begin{figure}[h]
\includegraphics[width = 
0.9\textwidth]{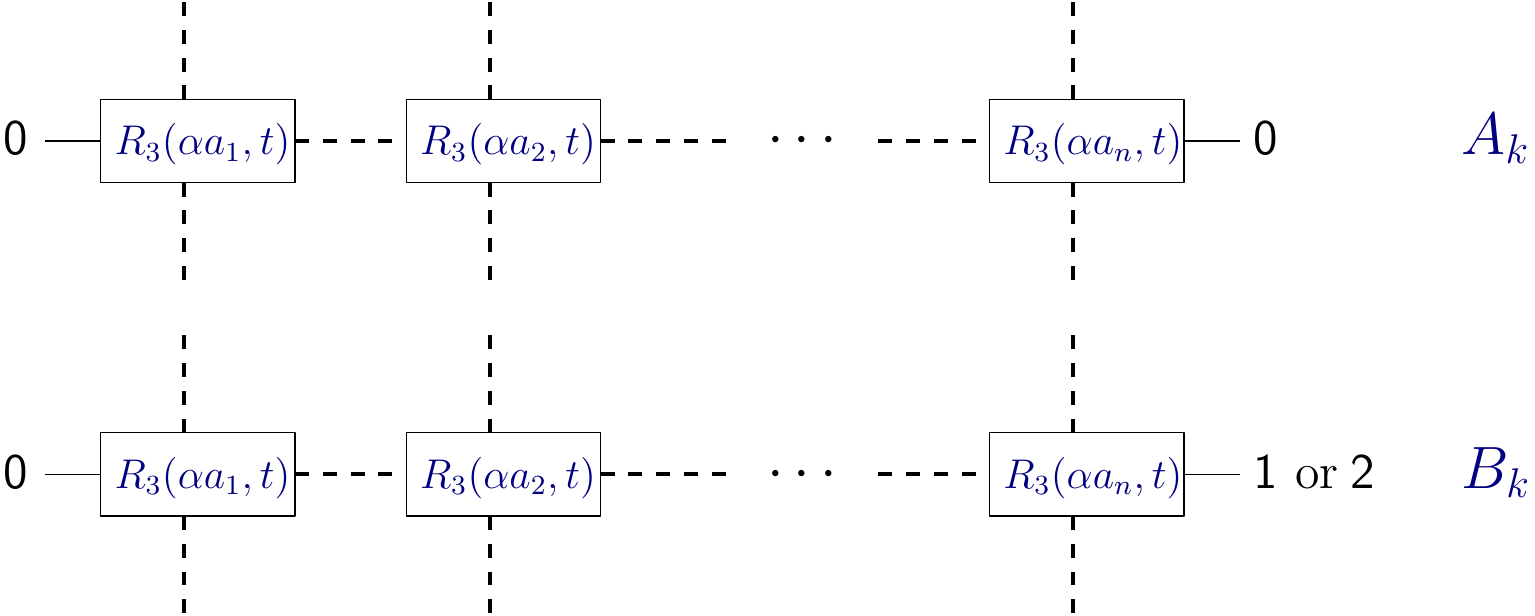}
\caption{Top: Transfer operator $\mathcal{A}_k$. Bottom: Transfer operator $\mathcal{B}_k$.}
\label{TransferMatrixAB}
\end{figure}

\begin{lemma}
\label{KeyHrelationsLemma}
For $0 \leq k_{1} \leq k_{2} \leq n$ we have
\begin{flalign}
\label{Acommutation}
& \mathcal{A}_{k_{2}}H_{k_{1}, k_{2}} = H_{k_{1}, k_{2}}\mathcal{A}_{k_{1}} + H_{k_{1}+1, k_{2}}\mathcal{B}_{k_{1}}; \\
\label{Bcommutation}
& \mathcal{B}_{k_{2}}H_{k_{1}, k_{2}} = \alpha\left(1-t^{k_{2}-k_{1}+1}\right)H_{k_{1}, k_{2}+1}\mathcal{A}_{k_{1}} + \alpha H_{k_{1}+1, k_{2}+1}\mathcal{B}_{k_{1}}.
\end{flalign}
\end{lemma}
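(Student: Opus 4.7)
The plan is to prove \eqref{Acommutation} and \eqref{Bcommutation} simultaneously as the two components of a single two-row commutation relation, obtained by a Yang--Baxter train argument between the $R_{ext}$ transfer matrix $H$ and the $R_3$ transfer matrix that defines $\mathcal{A}_k$ and $\mathcal{B}_k$. The two left-hand sides correspond to placing an $R_3$-row on top of an $R_{ext}$-row; the two right-hand sides correspond to the reverse ordering, resummed over the intermediate row. Recognizing them as the ``$\mathsf{0}$ out'' and ``$\mathsf{1}/\mathsf{2}$ out'' pieces of one composite identity strongly suggests an intertwiner-based proof in the style of the standard $RTT$ arguments.

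First I would search for an auxiliary intertwining $R$-matrix
\[
\mathcal{R}\colon W \otimes U_2 \;\longrightarrow\; U_2 \otimes W
\]
satisfying the local Yang--Baxter identity
\[
\mathcal{R}_{12}\,(R_{ext})_{13}\,(R_3)_{23} \;=\; (R_3)_{23}\,(R_{ext})_{13}\,\mathcal{R}_{12}
\]
on $W \otimes U_2 \otimes W$, where space $3$ is the quantum space at a single lattice column. Because $U_2$ is graded by $x+y$ and each application of $R_{ext}$ or $R_3$ shifts occupations by at most one, the verification reduces to a finite case check on each value of $x+y$ and each external triple in $W^{\otimes 2}$. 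Once this local identity is established, $\mathcal{R}$ can be inserted at the left boundary, where the distinguished inputs $\{0,0\} \in U_2$ and $\mathsf{0}\in W$ fix it up to a scalar, and then pulled across all $n$ columns in the standard way. At the right boundary the resulting two auxiliary edges are free and can be expanded into the ``$\mathsf{0}$'' and ``$\mathsf{1}/\mathsf{2}$'' components of the $R_3$-edge, yielding exactly the right-hand sides of \eqref{Acommutation} and \eqref{Bcommutation}.

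The coefficients are then extracted from the right boundary. The ``$\mathsf{0}$ out'' projection of the pulled identity reproduces $H_{k_1, k_2}\mathcal{A}_{k_1} + H_{k_1+1, k_2}\mathcal{B}_{k_1}$, giving \eqref{Acommutation}; the ``$\mathsf{1}/\mathsf{2}$ out'' projection reproduces the right-hand side of \eqref{Bcommutation}. The asymmetric factor $\alpha(1 - t^{k_2-k_1+1})$ in front of $H_{k_1, k_2+1}\mathcal{A}_{k_1}$ appears naturally from the two ways in which an outgoing $\mathsf{2}$-edge on the top row can emerge: one uses the $\mathsf{2}$-channel of $R_{ext}$ with weight $(1-t^y)$ evaluated along the column of occupations accumulated by $H$, summing to the $t$-power indicated, while the other contributes the bare $\alpha$ that multiplies $H_{k_1+1, k_2+1}\mathcal{B}_{k_1}$.

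The main obstacle is the guess-and-verify step for $\mathcal{R}$: its matrix entries are not determined by the statement of the lemma and have to be calibrated so that the coefficients in \eqref{Bcommutation} come out correctly after the train is pulled across. Once a candidate $\mathcal{R}$ is written down, checking the local Yang--Baxter equation is a finite but tedious enumeration, each case reducing to a polynomial identity in $t$ and $\alpha a_i$; the bookkeeping of the $t^y$ and $1-t^y$ weights in \eqref{ExtendedVertexDef} against the stochastic weights of $R_3$ is where any miscounting would first surface. I would expect the computation to mirror analogous intertwining verifications for the stochastic six-vertex and higher-spin $R$-matrices used in works such as \cite{BM18}, adapted to the extended $\mathsf{0}$--$\mathsf{1}$--$\mathsf{2}$ weights.
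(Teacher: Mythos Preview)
Your proposal is correct and matches the paper's approach almost exactly: the paper introduces an explicit auxiliary vertex model $R_{aux}(t)\colon U_2\otimes W\to U_2\otimes W$, verifies a Yang--Baxter type equation among $R_3$, $R_{ext}$, and $R_{aux}$, and then pulls it through via the usual train argument, splitting the right-boundary output into the $\mathsf{0}$ versus $\mathsf{1}/\mathsf{2}$ pieces to obtain \eqref{Acommutation} and \eqref{Bcommutation} separately. The only real difference from your outline is that the paper supplies the explicit weights of $R_{aux}$ and checks the needed boundary sum identities directly, which is precisely the ``guess-and-verify'' step you flagged as the main obstacle.
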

\begin{proof}[Proof of lemma \ref{KeyHrelationsLemma}] To prove relations \eqref{Acommutation} and \eqref{Bcommutation} we will utilize a Yang-Baxter type equation relating $R_{3}$, $R_{ext}$ and another (auxiliary) vertex model $R_{aux}(t): U_{2} \otimes \langle \mathsf{0}, \mathsf{1}, \mathsf{2} \rangle \to U_{2} \otimes \langle \mathsf{0}, \mathsf{1}, \mathsf{2} \rangle$ with weights as specified on Fig. \ref{ExtraR}.
\begin{figure}[h]
\includegraphics[width = 
0.7\textwidth]{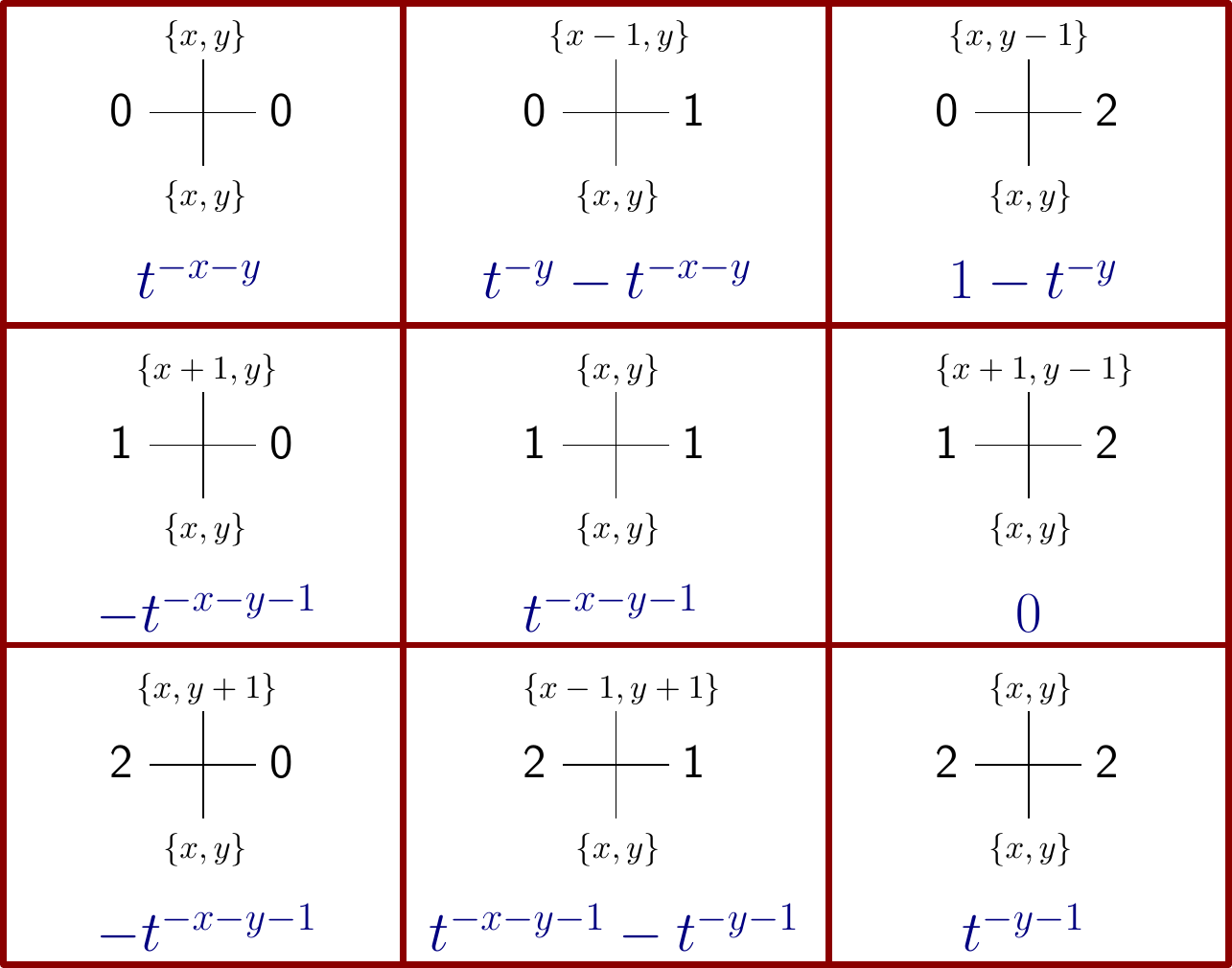}
\caption{Weights of the vertex model defined via $R_{aux}(t)$. All configurations not appearing on this picture are assumed to have weight $0$.}
\label{ExtraR}
\end{figure}
The Yang-Baxter type equation we need is an equality of operators $\langle \mathsf{0}, \mathsf{1}, \mathsf{2} \rangle \otimes U_{2} \otimes \langle \mathsf{0}, \mathsf{1}, \mathsf{2} \rangle \to \langle \mathsf{0}, \mathsf{1}, \mathsf{2} \rangle \otimes U_{2} \otimes \langle \mathsf{0}, \mathsf{1}, \mathsf{2} \rangle$ as specified on Fig. \ref{HPYangBaxter}.
\begin{figure}[h]
\includegraphics[width = 
0.9\textwidth]{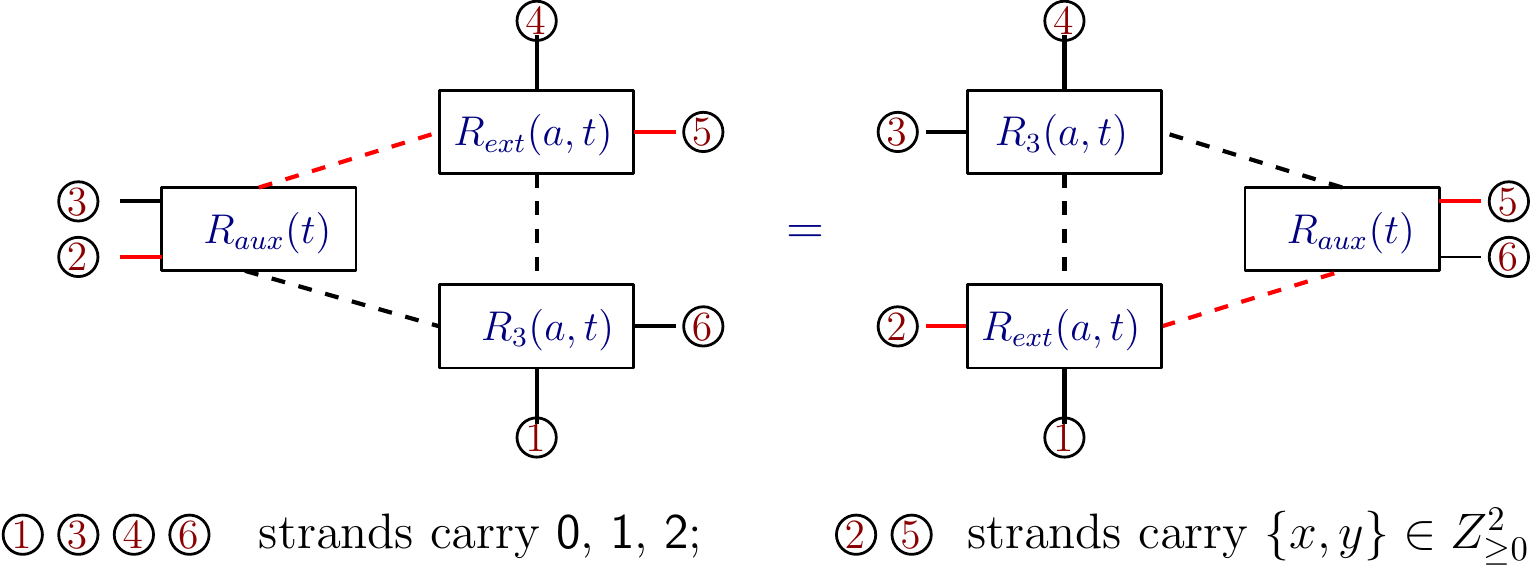}
\caption{Yang-Baxter type equation for operators $R_{3}$, $R_{ext}$, $R_{aux}$.}
\label{HPYangBaxter}
\end{figure}

Note that $R_{aux}((0, 0) \otimes \mathsf{0}) = (0, 0) \otimes \mathsf{0}$ and for any $(x, y) \in \mathbb{Z}_{\geq 0}^{2}$ we have 
\begin{align}
\label{Rauxrel1}
R_{aux}((x, y), \mathsf{0}, (x, y), \mathsf{0})+ R_{aux}((x, y), \mathsf{0}, (x-1, y), \mathsf{1})+R_{aux}((x, y), \mathsf{0}, (x, y-1), \mathsf{2}) = 1, \\
R_{aux}((x-1, y), \mathsf{1}, (x, y), \mathsf{0})+ R_{aux}((x-1, y), \mathsf{1}, (x-1, y), \mathsf{1})+R_{aux}((x-1, y), \mathsf{1}, (x, y-1), \mathsf{2}) = 0, \nonumber \\
R_{aux}((x, y-1), \mathsf{2}, (x, y), \mathsf{0})+ R_{aux}((x, y-1), \mathsf{2}, (x-1, y), \mathsf{1})+R_{aux}((x, y-1), \mathsf{2}, (x, y-1), \mathsf{2}) = 0. \nonumber
\end{align}
Multiply both sides of \eqref{Acommutation} by $\alpha^{k_{2}}$. Then this equality becomes a corollary of a repeated application of the Yang-Baxter type equation from Fig. \ref{HPYangBaxter} together with \eqref{Rauxrel1}. More precisely, see the chain of equalities \eqref{Acommutationchain}
 
\begin{multline}
\includegraphics[scale=1]{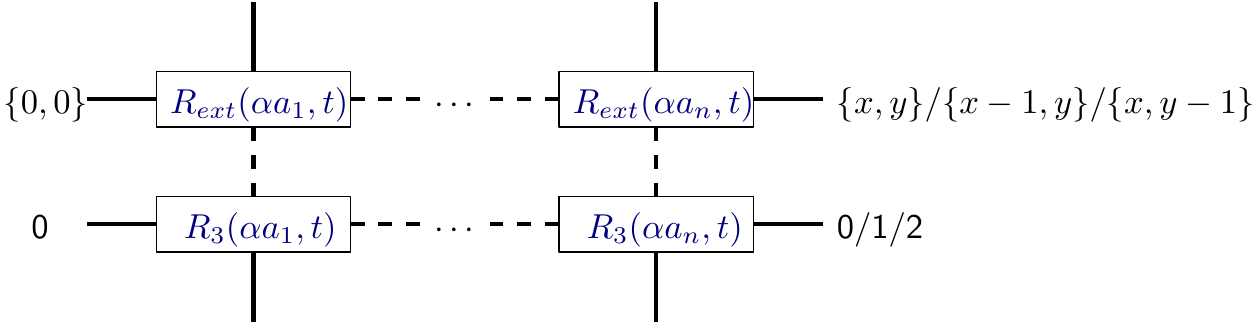} \scalebox{2}{$=$} \\ \includegraphics[scale=1]{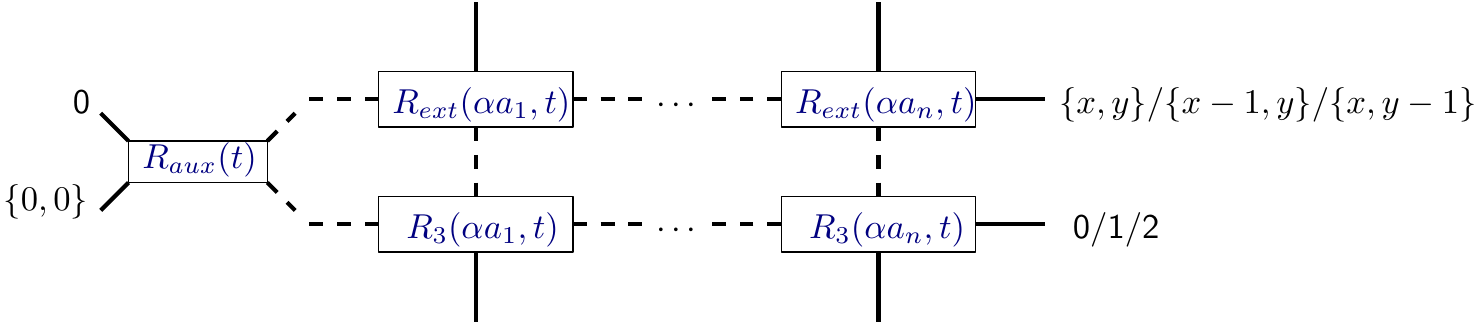} \scalebox{2}{$=$} \\
\includegraphics[scale=1]{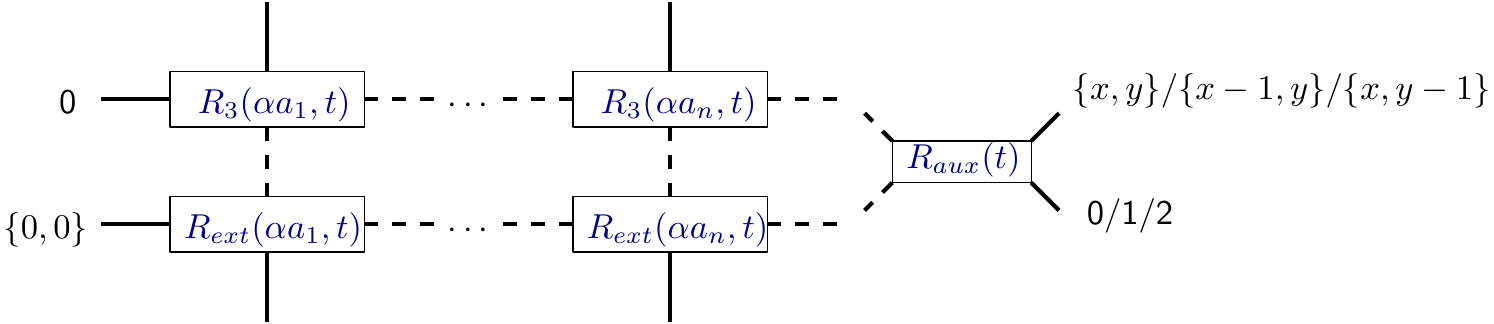} \scalebox{2}{$=$} \\
\includegraphics[scale=1]{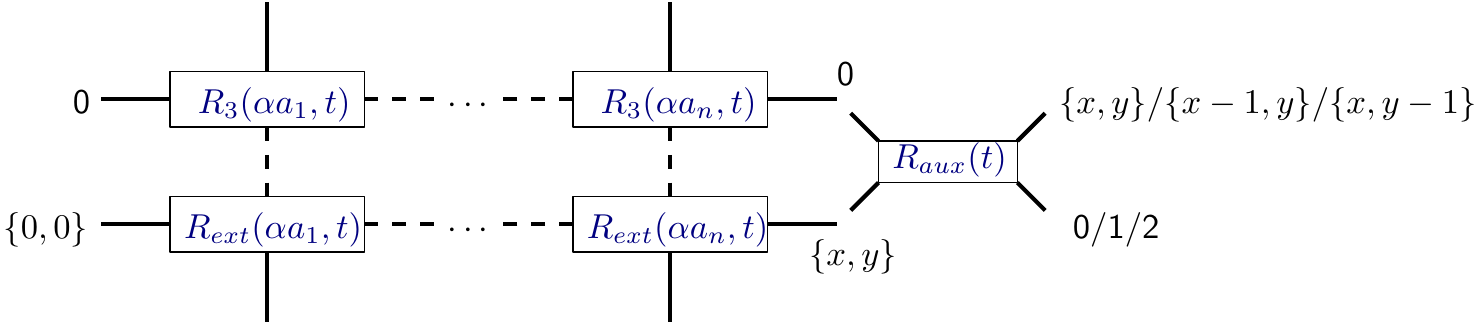} \scalebox{2}{$=$} \\
\includegraphics[scale=1]{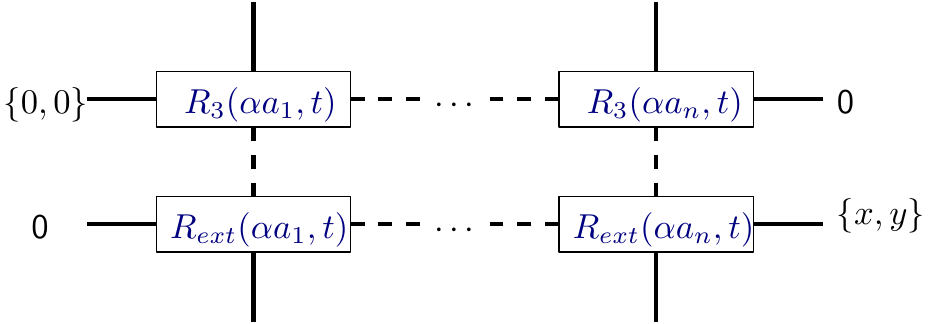}
\label{Acommutationchain}
\end{multline}

Note that for any $(x, y) \in \mathbb{Z}_{\geq 0}^{2}$ we have 
\begin{align}
\label{Rauxrel2}
 R_{aux}((x, y), \mathsf{0}, (x, y), \mathsf{0})\cdot (1-t^{x+y})+ R_{aux}((x, y), \mathsf{0}, (x-1, y), \mathsf{1})+ R_{aux}((x, y), \mathsf{0}, (x, y-1), \mathsf{2}) = 0, \nonumber \\
 R_{aux}((x-1, y), \mathsf{1}, (x, y), \mathsf{0}) \cdot (1-t^{x+y}) + R_{aux}((x-1, y), \mathsf{1}, (x-1, y), \mathsf{1})+ \nonumber \\ + R_{aux}((x-1, y), \mathsf{1}, (x, y-1), \mathsf{2}) = 1, \\
R_{aux}((x, y-1), \mathsf{2}, (x, y), \mathsf{0})\cdot (1-t^{x+y})+ R_{aux}((x, y-1), \mathsf{2}, (x-1, y), \mathsf{1}) + \nonumber \\ + R_{aux}((x, y-1), \mathsf{2}, (x, y-1), \mathsf{2}) = 1. \nonumber
\end{align}

Multiply both sides of \eqref{Bcommutation} by $\alpha^{k_{2}}$. Then this equality becomes a corollary of a repeated application of the Yang-Baxter type equation from Fig. \ref{HPYangBaxter} together with \eqref{Rauxrel2}. More precisely, see the chain of equalities \eqref{Bcommutationchain}

\begin{multline}
\includegraphics[scale=1]{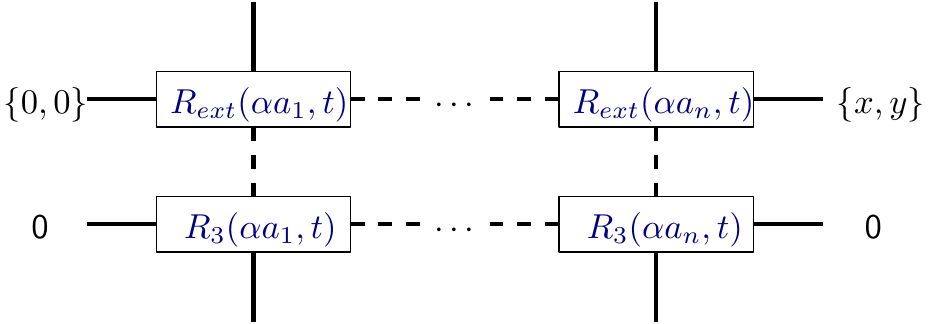} \scalebox{2}{$+$} \\ \includegraphics[scale=1]{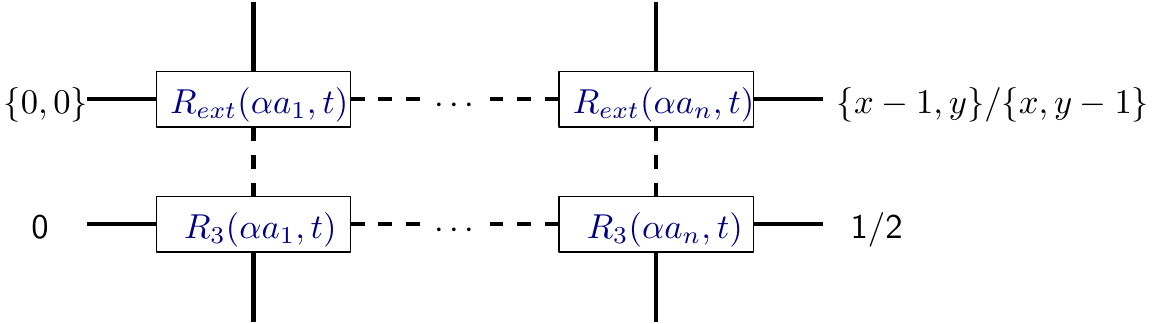} \scalebox{2}{$=$} \\ 
 \includegraphics[scale=1]{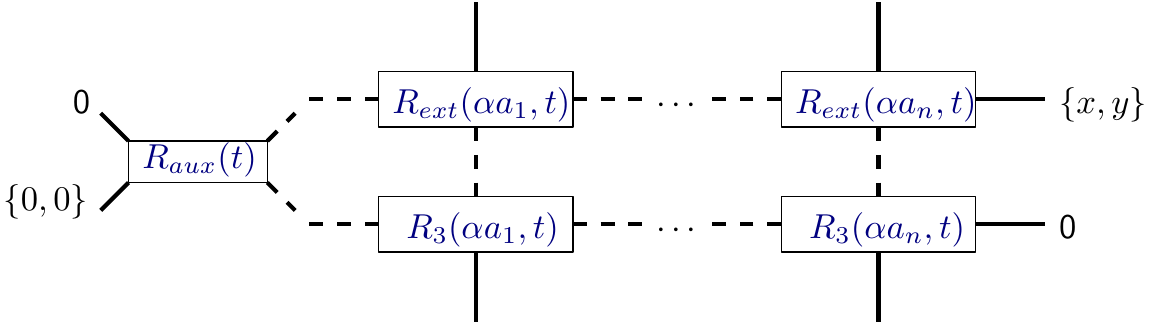} \scalebox{2}{$+$} \\ \includegraphics[scale=1]{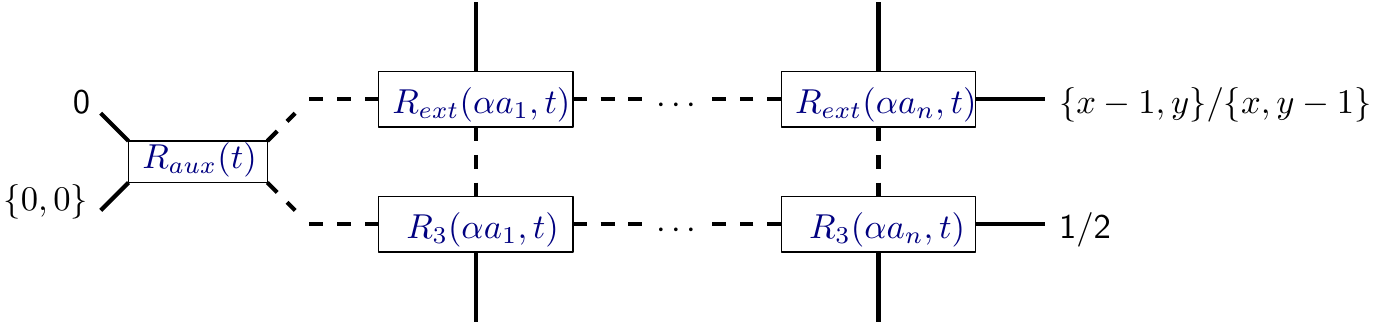} \scalebox{2}{$=$} \\ 
\includegraphics[scale=1]{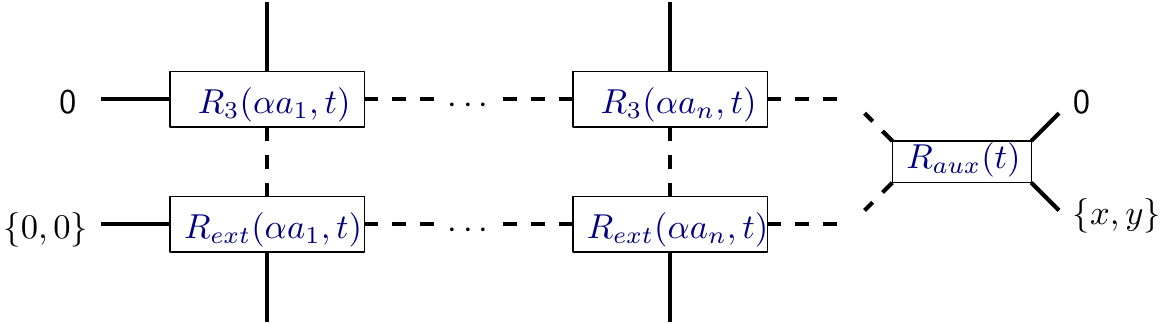} \scalebox{2}{$+$} \\ \includegraphics[scale=1]{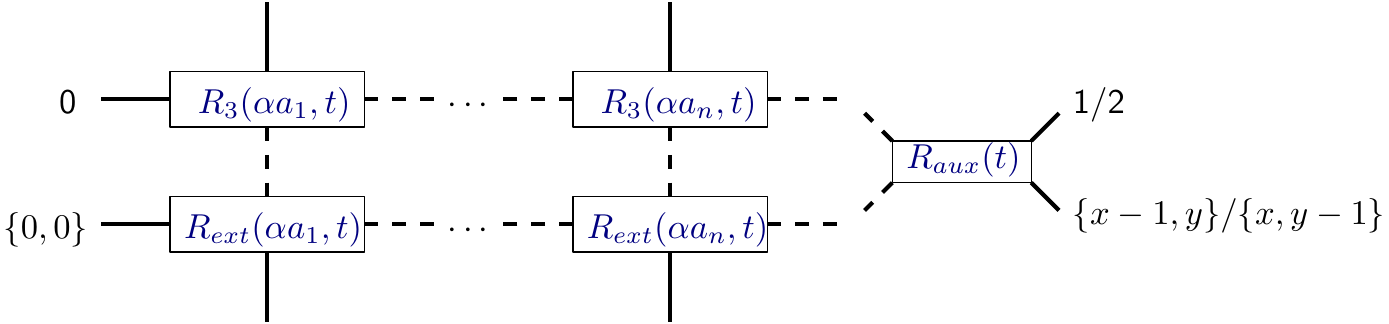} \scalebox{2}{$=$} \\
\includegraphics[scale=1]{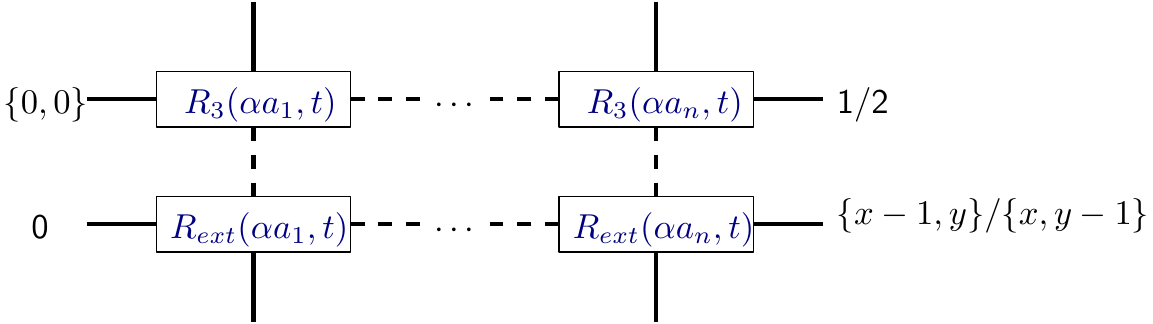} 
\label{Bcommutationchain}
\end{multline}
\end{proof}
It remains to show that \eqref{HorizontalPieriRelation} can be proved for any given $\lambda$ using Lemma \ref{KeyHrelationsLemma}.  Consider the operator  $\left(\mathcal{A}_{\lambda_{1}'} + \mathcal{B}_{\lambda_{1}'}  \right)  \circ H_{\lambda'_{2}, \lambda'_{1}} \circ   \cdots \circ H_{\lambda'_{-1}, \lambda'_{-2}} \circ  H_{0, \lambda'_{-1}}$ and repeatedly use the commutation relations \eqref{Acommutation} and \eqref{Bcommutation} to move $\mathcal{A}$'s and $\mathcal{B}$'s through all the $H$'s. Note that  \eqref{Acommutation} for $k_{1} = k_{2} =k$ becomes $\mathcal{A}_{k}H_{k, k} = H_{k, k}\mathcal{A}_{k}$. Then the resulting sum is
\begin{multline}
\sum_{\mu: \ \lambda \prech \mu, \ \mu_{1} = \lambda_{1}} \left( \left[\alpha^{|\mu| - |\lambda|} \prod_{1 \leq i \leq \lambda_{1}: \ \mu_{i}'=\lambda_{i}'+1, \ \mu'_{i+1}=\lambda_{i+1}'} \left(1 -t^{\mu_{i}'-\lambda_{i+1}'}\right) \right] H_{\mu'_{2}, \mu'_{1}} \circ   \cdots \circ H_{\mu'_{-1}, \mu'_{-2}} \circ  H_{0, \mu'_{-1}} \circ \mathcal{A}_{0} \right. \\
 \left. + \left[\alpha^{|\mu| - |\lambda|} \prod_{1 \leq i \leq \lambda_{1}-1: \ \mu_{i}'=\lambda_{i}'+1, \ \mu'_{i+1}=\lambda_{i+1}'} \left(1 -t^{\mu_{i}'-\lambda_{i+1}'}\right) \right] H_{\mu'_{2}, \mu'_{1}} \circ   \cdots \circ H_{\mu'_{-1}, \mu'_{-2}} \circ  H_{1, \mu'_{-1}} \circ \mathcal{B}_{0} \right).
 \label{AfterHcommutation}
\end{multline}
Note that $\displaystyle \mathcal{A}_{0} =  \left( \prod_{i=1}^{n} \frac{1 - \alpha a_{i}}{1 - t \alpha a_{i}} \right) Id$. Substitute $k_{1} = k_{2} = 0$ in \eqref{Bcommutation} to get
\begin{align*}
\mathcal{B}_{0}H_{0, 0} = \alpha(1-t)H_{0, 1}\mathcal{A}_{0} + \alpha H_{1, 1}\mathcal{B}_{0}.
\end{align*}
$H_{0, 0} = Id$, so 
\begin{align*}
\mathcal{B}_{0} = \alpha(1-t) \left( \prod_{i=1}^{n} \frac{1 - \alpha a_{i}}{1 - t \alpha a_{i}} \right)  \left(Id - \alpha H_{1, 1} \right)^{-1}H_{0, 1} = (1-t) \left( \prod_{i=1}^{n} \frac{1 - \alpha a_{i}}{1 - t \alpha a_{i}} \right)  \sum_{k=1}^{\infty} \alpha^{k}H_{1, 1}^{k-1}H_{0, 1}.
\end{align*}
Substituting this equality in \eqref{AfterHcommutation} gives us
\begin{multline}
\label{HorizontalPieriRelation2}
\left(\mathcal{A}_{\lambda_{1}'} + \mathcal{B}_{\lambda_{1}'}  \right)  H_{\lambda'_{1}, \lambda'_{2}} \circ  \cdots \circ H_{\lambda'_{-2}, \lambda'_{-1}} \circ  H_{\lambda'_{-1}, 0}   \\ = \left( \prod_{i=1}^{n} \frac{1 - \alpha a_{i}}{1 - t \alpha a_{i}} \right)\sum_{\lambda \prech \mu} \alpha^{|\mu|-|\lambda|}\phi_{\mu/\lambda} H_{\mu'_{2}, \mu'_{1}} \circ   \cdots \circ H_{\mu'_{-1}, \mu'_{-2}} \circ  H_{0, \mu'_{-1}}. 
\end{multline}
Finally, note that $T(\alpha) \pi^{\otimes n} = \pi^{\otimes n} \left(\mathcal{A}_{\lambda_{1}'} + \mathcal{B}_{\lambda_{1}'}  \right)$ as operators $W_{n, \lambda_{1}'} \to W_{n, 0}$. Together with \eqref{HorizontalPieriRelation2} this implies \eqref{HorizontalPieriRelation}.

\end{proof}

\end{document}